\newtheorem{theorem}{Theorem}[section]
\newtheorem{proposition}[theorem]{Proposition}
\newtheorem{corollary}[theorem]{Corollary}
\newtheorem{lemma}[theorem]{Lemma}
\theoremstyle{definition}
\newtheorem{definition}[theorem]{Definition}
\newtheorem{example}[theorem]{\textbf{Example}}
\title{Critical binomial ideals of Norhtcott type}
\author{{P.A.} {Garc\'{\i}a S\'anchez}}
\address{Departamento de \'Algebra, Universidad de Granada, E-18071 Granada, Espa\~na}
\email{pedro@ugr.es}
\author{{D.} Llena}
\address{Departamento de Matem\'{a}ticas, Universidad de Almeria, E-04120 Almeria,  Espa\~na}
\email{dllena@ual.es}
\thanks{The first and second authors are supported by the project MTM2014-55367-P, which is funded by Ministerio de Econom\'{\i}a y Competitividad and Fondo Europeo de Desarrollo Regional FEDER, and by the Junta de Andaluc\'{\i}a Grant Number FQM-343}
\author{I. Ojeda}
\address{Departamento de Matem\'{a}ticas, Universidad de Extremadura,  E-06071 Badajoz, Espa\~na}
\email{ojedamc@unex.es}
\keywords{Critical ideal, Northcott ideal, gluing.}
\thanks{The third author is partially supported by the project MTM2015-65764-C3-1, National Plan I+D+I and by Junta de Extremadura (FEDER funds) - FQM-024.}
\subjclass{13F20 (Primary) 13C40, 16W50 (Secondary).}
\begin{document}

\date{\today}

\begin{abstract}
In this paper, we study a family of binomial ideals defining monomial curves in the $n-$dimensional affine space determined by $n$ hypersurfaces of the form $x_i^{c_i} - x_1^{u_{i1}} \cdots x_n^{u_{1n}} \in \Bbbk[x_1, \ldots, x_n]$ with $u_{ii} = 0$, $i\in \{ 1, \ldots, n\}$. We prove that, the monomial curves in that family are set-theoretic complete intersection. Moreover, if the monomial curve is irreducible, we compute some invariants such as genus, type and Fr\"obenius number of the corresponding numerical semigroup. We also describe a method to produce set-theoretic complete intersection semigroup ideals of arbitrary large height.
\end{abstract}

\maketitle

\section*{Introduction}\label{sec:intro}

We study a family of unit free monoids contained in $T\otimes \mathbb Z$, with $T$ a finite Abelian group. The monoids of this family have an associated ideal generated by critical binomials, and they extend several classes of numerical semigroups studied by Bresinsky, Alc\'antar and Villareal, Herzog, O'Carrol and Planas, among others \cite{Villarreal, Bresinsky, Herzog70, OCPV}. These monoids will be called semigroups of Northcott type, since their defining ideals are inspired in \cite{Northcott}. 

Ideals of Northcott type are defined as the ideal generated by the coordinates of the product of a matrix $\Phi$ and a vector $\mathbf{m}$. The matrix $\Phi$ has all its entries equal to zero except in the diagonal and below the diagonal (and the upper right corner); nonzero entries are monomials in a single variable. The vector $\mathbf{m}$ is also a vector of monomials in each of the variables. We show that every ideal of Northcott type is realizable as the defining ideal of a monoid of rank one. By gluing these monoids with copies of $\mathbb{N}$, we cover and extend the families studied in \cite{Villarreal}. 

We are able to prove that ideals of Northcott type have a unique (up to signs) minimal system of binomial generators. This in particular means that monoids of Northcott type are uniquely presented. We also depict how the primary decomposition of these ideals looks like. 

As monoids of Northcott type are uniquely presented, we are able to derive some nonunique factorization properties. We show what are the maximum and minimum of the Delta sets of factorizations of the monoid, as well as the catenary degree. 

As a particular case, we study the case when our monoids are numerical semigroups. We show how to find families of Northcot type ideals yielding numerical semigroups, and compute some relevant invariants in terms of the exponents of the monomials appearing in $\Phi$ and $\mathbf{m}$. We start by computing the Ap\'ery sets, and from these we easily derive formulas for the genus, Frobenius number and pseudoFrobenius numbers (and thus the type) of the semigroup.

We show that if we have a monoid with defining ideal a critical ideal, and we glue this monoid with a copy of $\mathbb{N}$, then the resulting monoid is again critical. As we mentioned above, this allows us to cover several families studied in the literature that appear as a gluing of a Northcott type monoid and a copy of $\mathbb N$. One can repeat this process and obtain a family of critical monoids. Not every critical monoid can be obtained in this way.

In the final section we show that every ideal of Northcott type is a set-theoretic complete intersection. As gluings by copies of $\mathbb{N}$ preserve set-theoretic complete intersections, the monoids of the family considered in this manuscript are set-theoretic complete intersections.

\section{Preliminaries}\label{sec:prelim}

Let $G$ be a finitely generated commutative group and set $\mathcal{A} = \{\mathbf{a}_1, \ldots, \mathbf a_n\} \subset G$. Let $\mathbb{N} \mathcal{A} := \mathbb{N} \mathbf a_1 + \dots + \mathbb{N} \mathbf a_n$ be the commutative monoid generated by $\mathcal{A}$, where $\mathbb{N}$ stands for the set of non-negative integers. Throughout this paper, we assume that $\mathcal{A}$ is chosen to have the following property $\mathbb{N} \mathcal{A} \cap (- \mathbb{N} \mathcal{A}) = \{0\}$, that is to say, $\mathbb{N} \mathcal{A}$ is supposed to be unit free. 

Let $\Bbbk[\mathbf{x}] := \Bbbk[x_1, \ldots, x_n]$ be the polynomial ring in $n$ variables over an arbitrary field $\Bbbk$. As usual, we will denote by $\mathbf{x}^\mathbf{u}$ the monomial $x_1^{u_1} \cdots x_n^{u_n}$ of $\Bbbk[\mathbf{x}] ,$ where $\mathbf{u} = (u_1, \ldots, u_n) \in \mathbb{N}^n$. The ring $\Bbbk[\mathbf{x}]$ can be graded by $\mathbb N \mathcal{A}$ by assigning degree $\mathbf a_i$ to the variable $x_i$, $i\in\{ 1, \ldots, n\}$. So, by abusing the notation slightly, we will say that $\Bbbk[\mathbf{x}]$ is $\mathcal{A}-$graded.

Consider the ring homomorphism $$\pi : \Bbbk[\mathbf{x}] \to \Bbbk[\mathcal{A}] := \bigoplus\nolimits_{\mathbf{a} \in \mathbb{N} \mathcal{A}} \Bbbk\, \{\mathbf{t}^\mathbf{a}\};\ x_i \mapsto \mathbf{t}^{\mathbf{a}_i},$$ where the product of $\Bbbk[\mathcal{A}]$ is given by $\mathbf{t}^\mathbf{a} \cdot \mathbf{t}^{\mathbf{a}'} = \mathbf{t}^{\mathbf{a} + \mathbf{a}'}$. By \cite[Lemma 7.3]{MS05}, the kernel of $\pi$ is the ideal generated by all the binomials $\mathbf{x}^\mathbf{u} - \mathbf{x}^\mathbf{v}$ such that $\pi(\mathbf{x}^\mathbf{u}) = \pi(\mathbf{x}^\mathbf{v})$. This binomial ideal is usually called the semigroup ideal of $\mathcal{A}$ and it is denoted by $I_\mathcal{A}$. Clearly, $\Bbbk[\mathcal{A}]$ is $\mathcal{A}-$graded and $I_\mathcal{A}$ is $\mathcal{A}-$homogeneous. 

It is convenient to observe that $I_\mathcal{A}$ is not necessarily a prime ideal. In fact, $I_\mathcal{A}$ is prime if and only if $\mathbb{Z} \mathcal{A} := \mathbb{Z} \mathbf a_1 + \dots + \mathbb{Z} \mathbf a_n$ is torsion free (this will happen, in particular, when $G$ has not torsion). In this case, $I_\mathcal{A}$ is a so-called toric ideal and the monoid $\mathbb{N} \mathcal A$ is said to be an affine semigroup.

Recall that the dimension of the zero set of $I_\mathcal{A}$ is equal to the rank of $\mathbb Z \mathcal A$ (see, e.g., \cite[Proposition 7.5]{MS05}).

\begin{definition}
With the above notation, a binomial $x_i^{c_i} - \prod_{j \neq i} x_j^{u_{ij}} \in I_\mathcal{A}$ is called \textbf{critical with respect to $x_i$} if $c_i$ is the least positive integer such that $c_i \mathbf a_i \in \sum_{j \neq i} \mathbb{N} \mathbf a_j$. A subideal of $I_\mathcal{A}$ generated by $n$ critical binomials, one for each variable, is said to be a \textbf{critical binomial ideal associated to $\mathcal{A}$}.
\end{definition}

Our first result gives a sufficient condition for the existence of critical binomial ideals.

\begin{proposition}\label{Prop1}
If $\mathbb{Z}\mathcal{A}$ has rank $1$, then $I_\mathcal{A}$ contains a critical binomial ideal associated to $\mathcal{A}$.
\end{proposition}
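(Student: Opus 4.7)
The plan is to exhibit, for each $i \in \{1,\ldots,n\}$, a positive integer $c_i$ such that $c_i\mathbf{a}_i \in \sum_{j \neq i} \mathbb{N}\mathbf{a}_j$. Once this is done, the minimum such $c_i$ is well defined by the well-ordering of $\mathbb{N}$, any expression $c_i\mathbf{a}_i = \sum_{j\neq i} u_{ij}\mathbf{a}_j$ with $u_{ij} \in \mathbb{N}$ produces a critical binomial $x_i^{c_i} - \prod_{j\neq i}x_j^{u_{ij}}\in I_\mathcal{A}$, and collecting one such binomial for each $i$ yields a critical binomial ideal associated to $\mathcal{A}$ contained in $I_\mathcal{A}$.

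To produce $c_i$, I would exploit the structural decomposition $\mathbb{Z}\mathcal{A} \cong \mathbb{Z}\oplus T$, where $T$ is the (finite) torsion subgroup, which is available because $\mathbb{Z}\mathcal{A}$ is finitely generated of rank $1$. Writing $\mathbf{a}_i = (d_i, t_i)$ under this isomorphism, I would first use the hypothesis $\mathbb{N}\mathcal{A}\cap(-\mathbb{N}\mathcal{A})=\{0\}$ twice: to rule out $d_i = 0$ (a torsion generator $\mathbf{a}_i$ of order $k$ would satisfy $-\mathbf{a}_i = (k-1)\mathbf{a}_i \in \mathbb{N}\mathcal{A}$, forcing $\mathbf{a}_i=0$), and to show that all nonzero $d_i$ share a common sign (if $d_i>0$ and $d_j<0$, then the element $k\bigl((-d_j)\mathbf{a}_i + d_i\mathbf{a}_j\bigr)=0$ for $k$ the order of the torsion class, which exhibits $k(-d_j)\mathbf{a}_i$ as a nonzero element of $\mathbb{N}\mathcal{A}\cap(-\mathbb{N}\mathcal{A})$).

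With all $d_i$ positive, for any $j\neq i$ the element $d_j\mathbf{a}_i - d_i\mathbf{a}_j$ has zero $\mathbb{Z}$-component, hence lies in $T$; let $k_{ij}$ be its order. Then $k_{ij}d_j\mathbf{a}_i = k_{ij}d_i\mathbf{a}_j \in \mathbb{N}\mathbf{a}_j \subseteq \sum_{j'\neq i}\mathbb{N}\mathbf{a}_{j'}$, so $k_{ij}d_j$ is a valid (positive) witness for $c_i$. This is the entire existence argument, and taking the minimum over all such witnesses yields the critical exponent in the definition.

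The only delicate step is the sign argument in the second paragraph: it is essential to argue inside the torsion-quotient carefully, because the ambient group $G$ may itself carry torsion orthogonal to $\mathbb{Z}\mathcal{A}$, and the unit-free hypothesis has to be applied to an element of $\mathbb{N}\mathcal{A}$ that is genuinely nonzero. Once this is cleanly handled, the rest of the proof is a direct translation from the statement $c_i\mathbf{a}_i\in\sum_{j\neq i}\mathbb{N}\mathbf{a}_j$ to the corresponding binomial in $I_\mathcal{A}$ via the presentation of $\pi$.
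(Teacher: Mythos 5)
Your proof is correct and follows essentially the same route as the paper: decompose $\mathbb{Z}\mathcal{A}\cong\mathbb{Z}\oplus T$, observe that the unit-free hypothesis forces the $\mathbb{Z}$-components of the $\mathbf{a}_i$ to be nonzero and of a common sign, and then clear torsion by multiplying by the order of $T$ to land a positive multiple of $\mathbf{a}_i$ in $\sum_{j\neq i}\mathbb{N}\mathbf{a}_j$. If anything you are more careful than the paper, which simply asserts that ``clearly $a_1a_i\in\sum_{j\neq i}\mathbb{N}a_j$'' without spelling out the sign argument you make explicit.
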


\begin{proof}
By the fundamental theorem of finitely generated commutative groups $\mathbb{Z}\mathcal{A} \cong \mathbb{Z} \oplus T$ for some finite (abelian) group $T$. Let $t \geq 0$ be the order of $T$.
Thus, we may suppose that $\mathbf{a}_i = (a_i,\mathbf{b}_i)$ such that $a_i \in \mathbb{Z}$ and $\mathbf b_i \in T$, $i\in\{ 1, \ldots, n\}$. Clearly, $a_1 a_i \in \sum_{j \neq i} \mathbb N a_j,$  $i \in\{ 2, \ldots, n\}$ and $a_n a_1 \in \sum_{j \neq 1} \mathbb N a_j$, so it follows that $(t a_1) \mathbf a_i \in \sum_{j \neq i} \mathbb N \mathbf a_j$, $i \in\{ 2, \ldots, n\}$ and $(t a_n) \mathbf a_1 \in \sum_{j \neq 1} \mathbb N \mathbf a_j$. Therefore, for each $i$, there exists a minimal $c_i > 0$ such that $c_i \mathbf{a}_i \in \sum_{j \neq i} \mathbb N \mathbf a_j$. Thus, $I_\mathcal{A}$ contains a critical binomial ideal.
\end{proof}

The converse of the above proposition need not be true in general.

\begin{example}\label{example1}
Let $\mathbf{a}_i$ be the $i$th column of the following matrix 
\[A = \left(\begin{array}{ccccc} 3 & 5 & 7 & 0 & 0 \\ 0 & 0 & 0 & 2 & 3 \end{array}\right) \] and set $\mathcal{A} = \{\mathbf{a}_1, \ldots, \mathbf{a}_5\} \subseteq \mathbb{Z}^2$.
By direct computation, one can check that $I_\mathcal{A}$ is generated by critical binomials.
\end{example}

\begin{definition}
With the above notation, we will say that $\mathbb N\mathcal A$ is a \textbf{critical monoid} if $I_\mathcal A$ is a critical binomial ideal associated to $\mathcal{A}$. In this case, for short, we will say that the binomial ideal $I_\mathcal A$ is \textbf{critical}.
\end{definition}

In the following, we will assume that $\mathbb Z \mathcal A$ has rank one; in particular $I_\mathcal{A}$ will contain a critical binomial ideal associated to $\mathcal{A}$ by Proposition \ref{Prop1}. One of the aims of this work is to give conditions for $\mathbb N\mathcal A$ to be a critical monoid; equivalently, for $I_\mathcal{A}$ to be critical. For this purpose, we summarize first the known results:

\medskip
If $\mathcal{A} = \{\mathbf a_1, \ldots, \mathbf a_n\} \subseteq \mathbb{N}$ and $\mathrm{gcd}(\mathbf a_1, \ldots,\mathbf a_n) = 1$, then $\mathbb{N}\mathcal{A}$ is a numerical semigroup and $I_\mathcal{A}$ is the defining ideal of an irreducible monomial curve. In this case, it is known that every minimal system of binomial generators of $I_\mathcal{A}$ contains, up to sign, a critical binomial with respect to $x_i,$ for each $i$ (see, for example, \cite[Proposition 2.3]{KaOj}).

Under the hypothesis of $\mathbb{N}\mathcal{A}$ being a numerical semigroup, Herzog proved that every (toric) ideal $I_\mathcal{A}$ in $\Bbbk[x_1, x_2, x_3]$ is critical (see \cite[Section 3]{Herzog70}, or \cite[Theorem 2.2]{OjPis} for a concise formulation of Herzog's results). For $n = 4,$ Alc\'antar and Villarreal proved in \cite{Villarreal} that if $I_\mathcal{A}$ is critical of height $3,$ then (after permuting the variables appropriately) it has a set of binomial generators of one of the following three types.
\begin{itemize}
\item[(N):] $f_1 = x_1^{c_1} - x_3^{u_{13}} x_4^{u_{14}}, f_2 = x_2^{c_2} - x_1^{u_{21}} x_4^{x_{24}}, f_3 = x_3^{c_3} - x_2^{u_{32}} x_4^{u_{34}}$ and $D = x_4^{c_4} - x_1^{u_{41}} x_2^{u_{42}} x_3^{u_{43}},$ with $u_{ij} > 0,$ for all $i,j$.
\item[(gN1):]  $f_1 = x_1^{c_1} - x_3^{c_3}, f_2 = x_2^{c_2} - x_1^{u_{21}} x_3^{u_{23}}$ and $g = x_4^{c_4} - x_1^{u_{41}} x_2^{u_{42}} x_3^{u_{43}}$.
\item[(gN2):]  $f_1 = x_1^{c_1} - x_2^{u_{12}} x_3^{u_{13}}, f_2 = x_2^{c_2} - x_1^{u_{21}} x_3^{u_{23}}, D = x_3^{c_3} - x_1^{u_{31}} x_2^{u_{32}}$ and $g =  x_4^{c_4} - x_1^{u_{41}} x_2^{u_{42}} x_3^{u_{43}},$ with $u_{ij} > 0$ for all $i \neq 4$ and $u_{4j} \geq 0,$ not all zero, for all $j$.
\end{itemize}
Moreover, under suitable arithmetically conditions on the exponents, the converse turns out to be true (see Propositions \ref{Prop Th5Bresinsky} and \ref{prop:gN*}).

The following result due to Bresinsky (\cite[Theorem 5]{Bresinsky}) gives a necessary and sufficient condition for the critical ideal generated by (N) to be equal to $I_\mathcal{A}$. 

\begin{proposition}\label{Prop Th5Bresinsky}
With the same notation as in (N), let $B$ be the matrix whose rows are the exponent vectors of $f_1, f_2, f_3$ and $D$, that is,  
$$
B = \left(\begin{array}{cccc}
c_1 & 0 & -u_{13} & -u_{14} \\
-u_{21} & c_2 & 0 & -u_{24} \\
0 & -u_{32} & c_3 &  -u_{34} \\
-u_{41} &  -u_{42} & -u_{43} & c_4
\end{array}\right).
$$
Then there exists $\mathcal{A} \subset \mathbb{N}$  such that $\mathbb N \mathcal A$ is a numerical semigroup and $I_\mathcal{A} = \langle f_1, f_2, f_3, D \rangle$ if and only if
\begin{itemize}
\item[(a)] the sum of the columns of $B$ are zero;
\item[(b)] the $(4,j)-$minors of $B$, $j \in\{ 1, \ldots, 4\}$ are relatively prime.
\end{itemize}
\end{proposition}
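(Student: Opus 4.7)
My proof plan rests on the standard correspondence between binomial generation of the toric ideal $I_\mathcal{A}$ (for $\mathcal{A}\subset\mathbb{N}$ a numerical semigroup) and $\mathbb{Z}$-module generation of the lattice $L_\mathcal{A} := \{\mathbf{v}\in\mathbb{Z}^4 : \sum_i v_i a_i = 0\}$, which has rank $3$: the rows $r_1,r_2,r_3,r_4$ of $B$ (the exponent vectors of $f_1,f_2,f_3,D$) generate $L_\mathcal{A}$ as a sublattice iff $\langle f_1,f_2,f_3,D\rangle = I_\mathcal{A}$.

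For the ``if'' direction, I read (a) as the condition that each column sum of $B$ vanishes, equivalently $r_1+r_2+r_3+r_4 = \mathbf{0}$, so in particular $\det(B) = 0$. Define $a_j := (-1)^{j+1} M_j$, where $M_j$ is the $(4,j)$-minor of $B$. By cofactor expansion, $(a_1,a_2,a_3,a_4)$ is annihilated by each of $r_1,r_2,r_3$, and the dependence from (a) then forces $r_4\cdot(a_1,\ldots,a_4) = 0$ as well, placing $f_1,f_2,f_3,D\in I_\mathcal{A}$. A sign check using the positivity of the $u_{ij}$ built into type (N) confirms $a_j>0$. Condition (b) simultaneously gives $\gcd(a_1,\ldots,a_4)=1$ (so $\mathbb{N}\mathcal{A}$ is indeed a numerical semigroup) and, via the identification of the index $[L_\mathcal{A} : \langle r_1,r_2,r_3\rangle_\mathbb{Z}]$ with the gcd of the relevant $3\times 3$ minors, forces $\langle r_1,r_2,r_3\rangle_\mathbb{Z} = L_\mathcal{A}$. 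Hence $\langle f_1,f_2,f_3,D\rangle = I_\mathcal{A}$.

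For the ``only if'' direction, assume $\mathcal{A}\subset\mathbb{N}$ exists with $I_\mathcal{A} = \langle f_1,f_2,f_3,D\rangle$. Each $r_i$ lies in $L_\mathcal{A}$, and the ideal equality upgrades (by the toric correspondence just recalled) to $\langle r_1,r_2,r_3,r_4\rangle_\mathbb{Z} = L_\mathcal{A}$. A rank-$3$ sublattice of $\mathbb{Z}^4$ equals a given saturated such lattice precisely when the gcd of its $3\times 3$ minors is $1$, which is (b). For (a), rank considerations yield a unique (up to sign) primitive integer dependence $\sum_i\alpha_i r_i = \mathbf{0}$; the plan is to read this off position by position and combine the positivity of the $u_{ij}$, the positivity of the $a_j$, and the criticality of each $c_i$ with respect to $x_i$ to conclude first that the $\alpha_i$ share a common sign and then that they are all equal, so the dependence is $(1,1,1,1)$ and (a) follows.

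The main obstacle is precisely this last deduction: pinning down the primitive dependence as $(1,1,1,1)$ rather than some other primitive vector. The remaining ingredients---the cofactor-based construction of $\mathcal{A}$, the index-of-sublattice formula via $3\times 3$ minors, and the toric-ideal/lattice correspondence---are standard and reduce to lattice bookkeeping in $\mathbb{Z}^4$ once the setup is in place.
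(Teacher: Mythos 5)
The paper does not actually prove this proposition; it cites \cite[Proposition 3.2]{Villarreal} (which in turn rests on Bresinsky's Theorem~5), so the comparison has to be with what such a proof genuinely requires. Measured against that, your plan has a serious gap in the ``if'' direction: the ``standard correspondence'' you open with --- that $r_1,\dots,r_4$ generate the lattice $L_\mathcal{A}$ \emph{iff} $\langle f_1,f_2,f_3,D\rangle=I_\mathcal{A}$ --- is false in the direction you need. Generating the lattice only yields $I_\mathcal{A}=\bigl(\langle f_1,f_2,f_3,D\rangle:(x_1x_2x_3x_4)^\infty\bigr)$, not ideal equality. The classic example is $\mathcal{A}=\{3,4,5\}$: the exponent vectors of $x_1^3-x_2x_3$ and $x_2^2-x_1x_3$ generate the full relation lattice, yet these two binomials do not generate $I_\mathcal{A}$, which needs three. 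So after your (correct) lattice bookkeeping --- the cofactor construction of $a_j>0$, $\gcd(a_j)=1$ from (b), and the index formula giving $\langle r_1,r_2,r_3\rangle_{\mathbb{Z}}=L_\mathcal{A}$ --- the hard work has not yet begun: one must still show the ideal $\langle f_1,f_2,f_3,D\rangle$ is saturated with respect to the product of the variables. That is exactly what the paper does for the $n$-variable generalization in Theorem~\ref{th:gen-N}, via an explicit Gr\"obner-basis computation (the $S$-polynomial reductions) followed by the chain of colon-ideal identities; none of that is ``lattice bookkeeping,'' and it is the actual content of Bresinsky's theorem.

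In the ``only if'' direction your use of the correspondence is in the valid direction (ideal generation does force lattice generation, giving (b) via the minor-gcd index formula), but the step you yourself flag --- that the primitive left-kernel vector of $B$ is $(1,1,1,1)$, which is condition (a) --- is left unresolved, and it does not follow from positivity of the $u_{ij}$ alone: the column equations $\alpha_jc_j=\sum_{i\neq j}\alpha_iu_{ij}$ admit non-constant positive solutions for general positive data. Closing it requires invoking the criticality of each $c_i$ (each of the four generators must be a critical binomial, by \cite[Proposition~2.3]{KaOj}) together with the generation hypothesis, not just membership of the $f_i$ in $I_\mathcal{A}$. As it stands, both directions of the proposal reduce the proposition to its genuinely difficult steps and then declare those steps standard or defer them, so the proof is not complete.
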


\begin{proof}
See \cite[Proposition 3.2]{Villarreal} for a proof in our notation.
\end{proof}

Notice that conditions (a) and (b) in the above proposition implies that $B$ has rank $3$. So, there exist $a_1, \ldots, a_4 \in \mathbb N$ relatively prime such that $(a_1, \ldots, a_4) B = 0$. In particular, we have that the $(4,j)-$minor of $B$ is equal to $\alpha a_j$ for some nonzero integer $\alpha$ which is necessarily equal to one by condition (b).

\section{Critical ideals of Northcott type}\label{sec:critical-ideals-nt}

In this section, we will generalize the family (N) for arbitrary $n$ by producing a family of critical binomial ideal of Northcott type of the form $I_\mathcal{A}$. This will allow us to determine whole families of numerical semigroups whose semigroup ideal associated is critical.

It is fair to point out that the case $n=3$ has been analyzed exhaustively by O'Carroll and Planas-Vilanova in \cite{OCPV} for a Noetherian ring $R$ and $\mathbf m = (x_1, \ldots, x_n)$ being a sequence of elements of $R$ generating an ideal of height $n$. 

Now, for convenience, we reproduce verbatim (but adapted to our notation) the definition of Northcott ideal given in \cite{OCPV}: ``In \cite{Northcott}, Northcott considered the following situation: let $\mathbf{m} =
(m_1 , \ldots, m_d)$ and $\mathbf{f} = (f_1 ,  \ldots,  f_d)$ be two sets of $d$ elements of a Noetherian ring $R,$ connected
by the relations
\[
\begin{array}{ccc}
f_1 & = & a_{11} m_1 + a_{12} m_2 + \ldots + a_{1d} m_d,\\
f_2 & = & a_{21} m_1 + a_{22} m_2 + \ldots + a_{2d} m_d,\\
& \vdots & \\
f_d & = & a_{d1} m_1 + a_{d2} m_2 + \ldots + a_{dd} m_d,
\end{array}
\]
with $a_{ij} \in R$. Let $D$ stand for the determinant of the $d \times d$ matrix $\Phi = (a_{ij})$. Northcott proved that if $\langle f_1 , \ldots, f_d \rangle$ has grade $d$ and $\langle f_1 , \ldots, f_d, D \rangle$ is proper, then the projective dimension of $R/\langle f_1 , \ldots, f_d, D \rangle$ is $d,$ and $\langle f_1 , \ldots , f_d , D \rangle$ and all its associated prime ideals have grade $d$ \cite[Theorem 2]{Northcott}. Subsequently, Vasconcelos called such an ideal $\langle f_1 , \ldots, f_d , D \rangle$ the \textbf{Northcott ideal} associated to $\Phi$ and $\mathbf{m}$ (see, for example, \cite[p. 100]{Vasconcelos}).''

Returning to our problem of interest, one can observe that, by taking 
\[
\Phi = \left(\begin{array}{ccc}
x_1^{u_{41}} & 0 & -x_4^{u_{14}} \\
- x_4^{u_{24}} & x_2^{u_{42}} & 0 \\
0 & -x_4^{u_{34}} & x_3^{u_{43}} 
\end{array}\right)
\]
and $\mathbf{m} = (x_1^{u_{21}}, x_2^{u_{32}}, x_3^{u_{13}}),$
we have that $\Phi \, \mathbf{m}^\top = (f_1, f_2, f_3)^\top$; $\Phi \,  
(x_1^{u_{21}}, x_2^{u_{32}}, x_3^{u_{13}})^\top = (x_1^{u_{41}+u_{21}} - x_3^{u_{13}} x_4^{u_{14}}, x_2^{u_{42}+u_{32}} - x_1^{u_{21}} x_4^{x_{24}}, x_3^{u_{43}+u_{13}} - x_2^{u_{32}} x_4^{u_{34}})$ and $D = \det(\Phi) = x_4^{u_{14}+u_{24}+u_{34}} - x_1^{u_{41}} x_2^{u_{42}} x_3^{u_{43}}$. In other words, \emph{a toric ideal generated by (N) can be viewed as a Northcott type ideal} by Proposition \ref{Prop Th5Bresinsky}.

Clearly, we can generalize this situation in the following way.

\begin{definition}\label{Def CINT}
Let 
\begin{equation}\label{ecPhi}
\Phi = \left(\begin{array}{cccc}
x_1^{u_{n1}} & 0 & \ldots & - x_n^{u_{1n}} \\
- x_n^{u_{2n}} & x_2^{u_{n2}} & \ldots  & 0 \\
0 & - x_n^{u_{3n}} & \cdots & 0 \\
\vdots & \vdots & \ddots & \vdots \\
0 & 0 & \ldots & x_{n-1}^{u_{n n-1}}
\end{array}\right)
\end{equation}
and $\mathbf{m} = (x_1^{u_{21}}, \ldots, x_{n-2}^{u_{n-1 n-2}}, x_{n-1}^{u_{1 n-1}}),$ with $u_{ij} > 0,$ for all $i,j$. If $(f_1, \ldots, f_{n-1})^\top = \Phi\, \mathbf{m}^\top$ and $D = \det(\Phi),$ then the ideal of $\Bbbk[\mathbf{x}]$ generated by $\{f_1, \ldots, f_{n-1}, D\}$ will be called a \textbf{critical ideal of Northcott type}.
\end{definition}

Let us see that every critical ideal of Northcott type is the ideal associated to a (non-necessarily torsion-free) monoid.

\begin{theorem}\label{th:gen-N}
If $J$ is a critical ideal of Northcott type, there exists a commutative finite group $T$ and $\mathcal{A} \subset T \oplus \mathbb{Z}$ such that $I_\mathcal{A} = J$. In particular, $\mathbb Z \mathcal A$ has rank one.
\end{theorem}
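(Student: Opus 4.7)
Plan: The strategy is to read off the lattice of $\mathcal{A}$-relations directly from the generators of $J$, form the abelian group this lattice defines, and then identify $J$ with the resulting lattice ideal via saturation.

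First, let $B \in \mathbb{Z}^{n\times n}$ be the exponent matrix whose $i$-th row is the exponent vector of $f_i$ (set $f_n := D$), with positive sign on the pure-power monomial and negative on the other. A direct check using $c_i = u_{n,i}+u_{i+1,i}$ for $i<n$ (indices cyclic in $\{1,\ldots,n-1\}$) and $c_n = \sum_{i<n} u_{i,n}$ shows every column of $B$ sums to zero, so $\operatorname{rank}(B)\le n-1$. Put $L=\operatorname{rowspan}_\mathbb{Z}(B)$, $G = \mathbb{Z}^n/L$, let $\mathbf{a}_i$ be the image of the $i$-th standard basis vector in $G$, and $\mathcal{A}=\{\mathbf{a}_1,\ldots,\mathbf{a}_n\}$. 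To complete the setup I would exhibit a strictly positive integer right null vector $(a_1,\ldots,a_n)$ of $B$: this simultaneously forces $\operatorname{rank}(B)=n-1$, hence $G\cong\mathbb{Z}\oplus T$ with $T$ finite, and the projection $G\to\mathbb{Z}$ sending $\mathbf{a}_i\mapsto a_i > 0$ prevents nontrivial units in $\mathbb{N}\mathcal{A}$. Existence of such a null vector is a Perron--Frobenius/matrix--tree statement: the off-diagonal support of $B$ describes a strongly connected digraph on $\{1,\ldots,n\}$ (every $i<n$ has an arrow to $n$ coming from $f_i$, and $n$ has an arrow to every $j<n$ coming from $D$); alternatively one solves the cyclic linear system $c_i a_i = u_{i,i-1}a_{i-1}+u_{i,n}a_n$ and $c_n a_n = \sum_{j<n} u_{n,j}a_j$ by Cramer's rule, using $u_{ij}>0$ throughout to obtain positivity.

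It remains to prove $J = I_\mathcal{A}$. The inclusion $J\subseteq I_\mathcal{A}$ is automatic, since each generating binomial of $J$ encodes a row of $B$, hence an element of the relation lattice $L = L_\mathcal{A}$. For the reverse, because the rows of $B$ generate $L$, the Eisenbud--Sturmfels saturation formula gives $I_\mathcal{A} = I_L = (J:(x_1\cdots x_n)^\infty)$, so it suffices to show $J$ is saturated with respect to each $x_i$. I would apply Northcott's theorem to $(f_1,\ldots,f_{n-1},D)$: the grade of $\langle f_1,\ldots,f_{n-1}\rangle$ equals $n-1$ since $\Bbbk[\mathbf{x}]/\langle f_1,\ldots,f_{n-1}\rangle$ is module-finite over $\Bbbk[x_n]$ (each $f_i$ expresses $x_i^{c_i}$ in terms of the other variables), while $J$ is proper, so every associated prime of $J$ has height $n-1$. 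Suppose now that some associated prime $P$ of $J$ contained $x_i$. Reducing $D$ modulo $x_i$ (for $i<n$) or reducing each $f_j$ modulo $x_n$ (for $i=n$) produces a pure power $x_k^{c_k}$ for every $k\neq i$, thanks to the positivity $u_{ij}>0$; iterating gives $\sqrt{J+\langle x_i\rangle}=\langle x_1,\ldots,x_n\rangle$, so $\operatorname{ht}(P)=n$, contradicting Northcott. Hence $J$ is saturated, $J = I_\mathcal{A}$, and $\operatorname{rank}(\mathbb{Z}\mathcal{A})=\operatorname{rank}(G)=1$.

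The principal obstacle is the saturation step. The positivity hypothesis $u_{ij}>0$ built into the definition of a critical ideal of Northcott type is exactly what makes each $\sqrt{J+\langle x_i\rangle}$ collapse to the homogeneous maximal ideal; combined with Northcott's control on the heights of the associated primes of $J$, this rules out any embedded boundary component and upgrades the formal containment $J\subseteq I_\mathcal{A}$ to equality. Checking positivity of the null vector $(a_1,\ldots,a_n)$ is a secondary technical point, but is taken care of by the strong connectedness of the associated digraph.
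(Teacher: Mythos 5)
Your argument is correct, but it follows a genuinely different route from the paper's. The paper establishes $J = I_\mathcal{A}$ by (i) showing via explicit $S$-polynomial computations that $\{f_1,\ldots,f_{n-1},D\}$ is a Gr\"obner basis for a graded reverse-lex order singling out $x_n$, (ii) invoking Bigatti--La Scala--Robbiano to get $J=(J:x_n^\infty)$, and then (iii) running a delicate colon-ideal manipulation, variable by variable, to propagate saturation from $x_n$ to all $x_i$. You instead observe that $\Bbbk[\mathbf{x}]/\langle f_1,\ldots,f_{n-1}\rangle$ is module-finite over $\Bbbk[x_n]$ (because the leading terms $x_i^{c_i}$ are pairwise coprime pure powers), so $\langle f_1,\ldots,f_{n-1}\rangle$ has grade $n-1$; Northcott's theorem then forces every associated prime of $J$ to have height exactly $n-1$, and since $\sqrt{J+\langle x_i\rangle}=\mathfrak m$ for every $i$ (from $D\equiv -x_n^{\sum u_{jn}}\bmod x_i$ and $f_j\equiv x_j^{c_j}\bmod x_n$, using $u_{ij}>0$), no associated prime can contain any $x_i$. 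Hence $J$ is saturated with respect to all variables and the Eisenbud--Sturmfels formula gives $J=(J:(x_1\cdots x_n)^\infty)=I_L=I_\mathcal A$. This is more conceptual: it replaces the Gr\"obner computation and the paper's colon-ideal bookkeeping with a single structural input (Northcott's height control), which is especially apt given the ideals are named after Northcott. The trade-off is that both approaches still need the rank and unit-freeness facts; the paper proves these by an explicit minor computation and a system of inequalities, while you assert a strictly positive right null vector. Your sketch there is too brief to count as a proof: a positive null vector alone only bounds the rank from above, and the positivity claim requires checking that the cyclic contraction factor $\dfrac{u_{1,n-1}\,u_{21}\,u_{32}\cdots u_{n-1,\,n-2}}{c_1c_2\cdots c_{n-1}}$ is strictly less than $1$, which follows from $u_{21}<c_1,\ u_{32}<c_2,\ \ldots,\ u_{1,n-1}<c_{n-1}$; once that is in place, the cyclic system has a unique positive solution for each fixed $a_n>0$, which simultaneously yields rank $n-1$ and the positive grading that rules out nontrivial units. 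With that point spelled out, your proposal is a complete alternative proof.
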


\begin{proof}
To each binomial $\mathbf x^{\mathbf{v}}-\mathbf x^{\mathbf{v}'} \in \Bbbk[\mathbf x]$ we associate the vector $\mathbf{v}-\mathbf{v} \in \mathbb Z^n$. With this notation, the rows of the following matrix 
\[ 
M := 
\begin{pmatrix}
u_{n1}+u_{21} & 0 & \ldots & 0 & -u_{1\, n-1} & -u_{1n} \\
-u_{21} & u_{n2}+u_{32} & \ldots & 0 & 0 & -u_{2n} \\
0 & -u_{32} &  \ldots & 0 & 0 & -u_{3n} \\
\vdots & \vdots & & \vdots & \vdots & \vdots \\
0 & 0 & \ldots & -u_{n-1\, n-2} & u_{n\, n-1}+u_{1\, n-1} & -u_{n-1\, n} \\ -u_{n1} & -u_{n2} & \ldots & -u_{n\, n-2} & -u_{n\, n-1}  & \sum_{i=1}^{n-1} u_{in}   
\end{pmatrix}
\]
correspond to the exponent vectors of the binomials $f_1, \ldots, f_{n-1}$ and $D$ in Definition \ref{Def CINT}.

Since the last row of $M$ is the sum of the rows above and the minor obtained by removing the last row and column of $M$ is 
\[
\left(\prod\nolimits_{i=1}^{n-2} (u_{n i} + u_{i+1\, i}) \right)\left(u_{n\, n-1}+u_{1\, n-1}\right) - \left(\prod\nolimits_{i=1}^{n-2}  u_{i+1\, i} \right) u_{1\, n-1} > 0,
\]
we have that $M$ has rank $n-1$. Therefore, there exist unimodular matrices $P$ and $Q$ such that $P^{-1} M Q = \Delta$ where $\Delta$ is the Smith normal form of $M$, that is, $\Delta$ is a diagonal matrix with entries in the diagonal equal to $d_1, \ldots, d_n$ where $d_1 \mid d_2 \mid \cdots \mid d_{n-1}$ are positive integers and $d_n = 0$. Now, it is well-known that if $i$ is the smallest index such that $d_i \neq 1$ and $A$ is the $(n-i+1) \times n$ matrix whose rows are the last $n-i$ columns of $Q$, then we can choose $\mathcal A = \{\mathbf{a}_1, \ldots, \mathbf{a}_n\} \subset \mathbb{Z}_{d_{i+1}} \oplus \cdots \oplus \mathbb{Z}_{d_n} \oplus \mathbb{Z}$ with $\mathbf a_i$ the $i$th column of $A$. With this notation, the rows of $M$ generate the subgroup of $\mathbb Z^n$ defined as $\mathcal M=\{\mathbf v- \mathbf v'\mid A \mathbf v = A \mathbf v'\}$ (this is a standard procedure, see for instance \cite[Chapter 3]{fg}).

Let us see that $\mathbb N \mathcal A$ is unit free. If there is $\mathbf a, \mathbf b \in \mathbb N\mathcal A$ such that $\mathbf a + \mathbf b=0$, then there exists $\mathbf v, \mathbf v' \in \mathbb N^n$ such that $\mathbf a=A\mathbf v$ and $\mathbf b = A \mathbf v'$. Hence $A(\mathbf v + \mathbf v')=0= A0$, which in particular implies that $\mathbf v + \mathbf v' \in \mathcal M\cap \mathbb N^n$. So, in order to prove that $\mathbb N\mathcal A$ is unit free, it suffices to show that $\mathcal M\cap\mathbb N^n=\{0\}$. And, as the last row of $M$ is the sum of the others, this is equivalent to prove that $(\alpha_1, \ldots, \alpha_{n-1}, 0) M \in \mathbb{N}^n$ implies $\alpha_i = 0$ for all $i$. The condition $(\alpha_1, \ldots, \alpha_{n-1}, 0) M \in \mathbb{N}^n$ is defined by the following system of inequalities:
\begin{align*}
\alpha_1 u_{n1} + (\alpha_1 - \alpha_2) u_{21} & \geq 0,\\
& \vdots \\
\alpha_{n-1} u_{n\, n-1} + (\alpha_{n-1} - \alpha_1) u_{1\, n-1} &\geq 0,\\
\sum\nolimits_{i=1}^{n-1}( - \alpha_i) u_{in} &\geq 0.
\end{align*}
If $\alpha_i \neq 0$ for some $i$, from the last inequality we obtain that $\alpha_j < 0$ for some $j$. For simplicity, we suppose $j = 1$. Thus, from the first inequality follows that $\alpha_2 < \alpha_1$. By repeating this argument with the second inequality, the third inequality, \ldots, and the $(n-1)$th inequality, we conclude that $\alpha_1 < \alpha_{n-1} < \cdots < \alpha_2 < \alpha_1 < 0$ which is impossible. Therefore $\alpha_i = 0$ for every $i$.

With this we have that $\mathbb N\mathcal A$ is a unit free cancellative monoid, and that $J\subseteq I_{\mathcal A}$. 

Now, as $\mathbb N \mathcal A$ is unit free, by \cite[Proposition 3.10]{fg}, we may suppose that $\mathcal{A} \subset \mathbb{Z}_{d_{i+1}} \oplus \cdots \oplus \mathbb{Z}_{d_n} \oplus \mathbb{N}$. Let $\mathbf{w} = (w_1, \ldots, w_n) \in \mathbb{N}^n$ be last row of $A$. Clearly the ideal $I_\mathcal{A}$ is homogeneous with respect to $\mathbf{w}$. 

Let $\prec$ be the monomial order on $\Bbbk[\mathbf{x}]$ represented by a matrix whose first row is $\mathbf{w}$ and second row is $(0, \ldots, 0, -1)$. We claim that $\mathcal G = \{f_1, \ldots, f_{n-1}, D\}$ is a Gr\"obner basis with respect to $\prec$. 

Since $\gcd\big(\mathrm{in}_\prec(f_i), \mathrm{in}_\prec(f_j) \big) = 1,$ it suffices to check that the $S$-polynomial $S(f_i, D)$ reduces to zero with respect to $\mathcal{G}$, for every $i\in \{ 1, \ldots, n-1\}$. By direct computation, one can see that 
\begin{align*}
S(f_i, D) = &\  x_{\sigma^{n-2}(i)}^{u_{n\, \sigma^{n-2}(i)}} \cdots  x_{\sigma^{2}(i)}^{u_{n\, \sigma^{2}(i)}} x_n^{u_{in}}  f_{\sigma(i)} + \\ & + x_{\sigma^{n-2}(i)}^{u_{n\, \sigma^{n-2}(i)}} \cdots  x_{\sigma^{3}(i)}^{u_{n\, \sigma^{3}(i)}}  x_n^{u_{in}+u_{\sigma(i) n}} f_{\sigma^2(i)} + \\ & + \cdots +  \\ & + x_n^{u_{in}+ \cdots + u_{\sigma^{n-3}(i) n}} f_{\sigma^{n-2}(i)}
\end{align*}
for every $i \in\{ 1, \ldots, n-1\}$, where $\sigma$ is the permutation $ n-1 \to n-2 \to \cdots \to 2 \to 1 \to n-1$. Therefore our claim follows and, by \cite[Theorem 3.1]{Bigatti}, we have that $\{f_1,\ldots, f_{n-1},D\}$ is a Gr\"obner basis of $(J:x_n^\infty)$, and consequently $ (J:x_n^\infty) = J$. 
Let $i \in \{1, \ldots, n-1\}$ and let $s_i$ is the smallest integer such that $(J:x_i^{s_i}) = (J:x_i^k),$ for some (every) $k \geq s_i$; clearly, $s_i$ exists by the Noetherian property of $\Bbbk[\mathbf x]$. In this case, one has that $J = (J:x_i^{s_i}) \cap (J+\langle x_i^{s_i} \rangle)$ (see, for instance \cite[Proposition ~7.2(a)]{Eisenbud96}). Moreover, taking into account that $D = x_1^{u_{n1}} \cdots x_{n-1}^{u_{n\, n-1}} - x_n^{\sum_{i=1}^{n-1} u_{in}} \in J$, we have that $x_i^{s_i-u_{ni}} x_n^{\sum_{i=1}^{n-1} u_{in}} \in  J+\langle x_i^{s_i} \rangle$ if $s_i \geq u_{ni}$ or $ x_n^{\sum_{i=1}^{n-1} u_{in}} \in J+\langle x_i^{s_i} \rangle $ if $s_i < u_{ni}$. Therefore, putting all this together, either 
\begin{align*}
(J:x_i^{s_i-u_{ni}}) & = (J:x_i^{s_i-u_{ni}} x_n^{\sum_{i=1}^{n-1} u_{in}})\\ & = \left( (J:x_i^{s_i}) \cap (J+\langle x_i^{s_i} \rangle) : x_i^{s_i-u_{ni}} x_n^{\sum_{i=1}^{n-1} u_{in}} \right) 
\\ & = \left( (J:x_i^{s_i}) : x_i^{s_i-u_{ni}} x_n^{\sum_{i=1}^{n-1} u_{in}} \right) \cap \left((J+\langle x_i^{s_i} \rangle) : x_i^{s_i-u_{ni}} x_n^{\sum_{i=1}^{n-1} u_{in}} \right)\\ & = (J:x_i^{2 s_i - u_{ni}}),
\end{align*}
in contradiction with the minimality of $s_i$, or 
\begin{align*} 
J & = (J:x_n^{\sum_{i=1}^{n-1} u_{in}}) = \left( (J:x_i^{s_i}) \cap (J+\langle x_i^{s_i} \rangle) : x_n^{\sum_{i=1}^{n-1} u_{in}} \right) 
\\ & = \left( (J:x_i^{s_i}) : x_n^{\sum_{i=1}^{n-1} u_{in}} \right) \cap \left((J+\langle x_i^{s_i} \rangle) : x_n^{\sum_{i=1}^{n-1} u_{in}}\right) = (J:x_i^{s_i})
\end{align*}

Therefore, we have that $J = ( (\ldots (J:x_1^{s_1}) : \ldots) : x_n^{s_n}) = ( J:(x_1 \cdots x_n)^s ),$ for some $s$. And, by \cite[Corollary 2.5]{Eisenbud96}, we conclude that $(J:(x_1\cdots x_n)^\infty) = I_\mathcal{A}$, whence $J=I_\mathcal A$.

The last statement is trivial.
\end{proof}

\begin{definition}
Let $\mathcal{A} \subseteq \mathbb Z \otimes T$ where $T$ denotes a commutative finite group. We will say that $\mathbb N \mathcal A$ is a \textbf{monoid of Northcott type} if $I_{\mathcal A}$ is a critical ideal of Northcott type.
\end{definition}

Observe that Theorem \ref{th:gen-N} establishes a bijective correspondence between monoid of Northcott type and critical ideal of Northcott type.   

Recall that a monoid $\mathbb N \mathcal A$ is \textbf{uniquely presented} if $I_\mathcal A$ admits  a unique minimal generating set of binomials (up to signs). Uniquely presented finitely generated commutative monoids where introduced in \cite{GSOj}, as the semigroup counterpart of the toric ideals generated by indispensable binomials (see \cite{OjVi2}).

\begin{corollary}\label{cor:uniq-pres}
Semigroups of Northcott type are uniquely presented. 
\end{corollary}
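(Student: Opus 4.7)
My plan is to invoke the characterization of uniquely presented monoids given in \cite{GSOj}: $\mathbb{N}\mathcal{A}$ is uniquely presented if and only if every minimal binomial generator of $I_\mathcal{A}$ is indispensable, equivalently, for each Betti degree $\mathbf{a}$ of $I_\mathcal{A}$ the monomial fiber $\pi^{-1}(\mathbf{t}^\mathbf{a})$ has cardinality exactly two. It therefore suffices to exhibit a minimal binomial generating set of $I_\mathcal{A}$ and to verify the fiber condition on each of its members.

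By Theorem~\ref{th:gen-N}, the set $\{f_1,\ldots,f_{n-1},D\}$ is a Gr\"obner basis of $I_\mathcal{A}=J$ with respect to $\prec$, with leading monomials $x_1^{c_1},\ldots,x_{n-1}^{c_{n-1}}$ and $x_1^{u_{n1}}\cdots x_{n-1}^{u_{n,n-1}}$. These are pairwise non-divisible, since $c_j>u_{n,j}$ for every $j<n$ (the difference is a positive exponent coming from $\mathbf{m}$), so this is a minimal binomial generating set. The relevant Betti degrees are therefore $c_1\mathbf{a}_1,\ldots,c_{n-1}\mathbf{a}_{n-1}$ together with $c_n\mathbf{a}_n$, where $c_n:=\sum_{k<n}u_{kn}$. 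Write $\mathbf{v}_i$ for the exponent vector of the second monomial of $f_i$ when $i<n$, and $\mathbf{v}_n:=(u_{n1},\ldots,u_{n,n-1},0)$.

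The plan is to show $\pi^{-1}(\mathbf{t}^{c_i\mathbf{a}_i})=\{x_i^{c_i},\mathbf{x}^{\mathbf{v}_i}\}$ for every $i$. Given $\mathbf{x}^\mathbf{u}$ in that fiber with $i<n$, I would first dispose of the sub-cases $u_i\geq c_i$ and $0<u_i<c_i$: the identity $(u_i-c_i)\mathbf{a}_i+\sum_{k\neq i}u_k\mathbf{a}_k=0$, combined with the unit-freeness of $\mathbb{N}\mathcal{A}$ proved in Theorem~\ref{th:gen-N}, forces $\mathbf{u}=c_i\mathbf{e}_i$ in the first sub-case, while the minimality built into the criticality of $c_i$ directly excludes the second. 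So $u_i=0$, and the task reduces to showing $\mathbf{u}=\mathbf{v}_i$. A quick check confirms that $\mathbf{x}^{\mathbf{v}_i}$ is a standard monomial for the Gr\"obner basis above---the exponent of $x_{\sigma(i)}$ in $\mathbf{v}_i$, namely $u_{i,\sigma(i)}$, is strictly less than $c_{\sigma(i)}=u_{n,\sigma(i)}+u_{i,\sigma(i)}$ because $u_{n,\sigma(i)}>0$, and similarly $(\mathbf{v}_i)_n<c_n$ because all $u_{kn}$ are positive---so $\mathbf{v}_i$ is the unique standard representative of the fiber. If $\mathbf{x}^\mathbf{u}$ with $u_i=0$ is instead non-standard, it must be divisible by some $x_j^{c_j}$ with $j\neq i$ and $j<n$; when $j$ equals the ``critical'' index $\sigma^{-1}(i)$ (namely $i+1$ if $i\leq n-2$, or $1$ if $i=n-1$, where $\sigma$ denotes the cyclic shift $k\mapsto k-1$ on $\{1,\ldots,n-1\}$ with $\sigma(1)=n-1$), the substitution $\mathbf{u}\leadsto\mathbf{u}-c_j\mathbf{e}_j+\mathbf{v}_j$ preserves the fiber and produces a monomial whose $x_i$-exponent equals $u_{\sigma^{-1}(i),i}$, a strictly positive integer strictly less than $c_i$, contradicting the criticality of $c_i$. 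A parallel analysis treats the fiber of $c_n\mathbf{a}_n$ associated to $D$, with $x_n$ playing the role of the pivot variable.

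The main obstacle is handling non-standard $\mathbf{x}^\mathbf{u}$ whose only indices $j$ satisfying $u_j\geq c_j$ are non-critical (i.e., $j\notin\{i,\sigma^{-1}(i)\}$). My plan is to iterate the substitution, tracking how the cyclic structure of $\Phi$ propagates exponent mass along $\sigma$: each non-critical reduction deposits $u_{j,\sigma(j)}>0$ units at coordinate $\sigma(j)$, so a finite chain of such reductions accumulates enough mass at coordinate $\sigma^{-1}(i)$ to exceed $c_{\sigma^{-1}(i)}$, at which point the critical reduction above can be applied to yield the contradiction. Making this termination-and-triggering argument rigorous---or, alternatively, bypassing it via an explicit lattice-point computation using the relation matrix $M$ built in the proof of Theorem~\ref{th:gen-N}---is the technical heart of the proof.
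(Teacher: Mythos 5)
Your proposal takes a genuinely different route from the paper. The paper's argument is short: writing the generators as $x_i^{c_i}-\mathbf{x}^{\mathbf{u}_i}$ for $i=1,\ldots,n$, it observes that if the Betti degrees $c_i\mathbf{a}_i$ are pairwise distinct then \cite[Corollary~5]{GSOj} applies immediately; and if $c_i\mathbf{a}_i=c_j\mathbf{a}_j$ for some $i\neq j$, then $\mathbf{x}^{\mathbf{u}_i}-\mathbf{x}^{\mathbf{u}_j}\in I_\mathcal{A}$, the supports of $\mathbf{u}_i$ and $\mathbf{u}_j$ overlap by the Northcott shape (both contain $x_n$ when $i,j<n$, and $\sigma(i)$ when $j=n$), and after cancelling the common variable one obtains a nonzero binomial in $I_\mathcal A$ with two standard monomials, contradicting the Gr\"obner basis established in Theorem~\ref{th:gen-N}. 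You instead attempt an explicit computation of the Betti fibers.

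The computation has a real gap, which you acknowledge in your final paragraph, and I do not think the sketch you give closes it. Your case split for $\mathbf{x}^{\mathbf u}\in\pi^{-1}(\mathbf t^{c_i\mathbf a_i})$ is fine for $u_i\geq c_i$ (unit-freeness) and for $0<u_i<c_i$ (criticality of $c_i$). But in the case $u_i=0$ you must rule out non-standard monomials other than $x_i^{c_i}$ in the fiber, and the reduction you propose only produces the contradiction $0<u_i<c_i$ if at some step you reduce with $f_{\sigma^{-1}(i)}$. Your claim that iterating non-critical reductions ``accumulates enough mass at coordinate $\sigma^{-1}(i)$ to exceed $c_{\sigma^{-1}(i)}$'' is not established: a non-critical reduction at index $j$ subtracts $c_j$ from coordinate $j$ and deposits $u_{j,\sigma(j)}$ at coordinate $\sigma(j)$ and $u_{jn}$ at coordinate $n$; nothing guarantees the deposit at $\sigma(j)$ ever reaches $c_{\sigma(j)}$ rather than the chain terminating at the standard monomial $\mathbf v_i$ without ever touching coordinate $i$. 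If that happens, your argument produces no contradiction and does not prove the fiber is $\{c_i\mathbf e_i,\mathbf v_i\}$. So the ``technical heart'' you flag is indeed unfinished, and the proposed fix (an explicit lattice-point argument using the matrix $M$) would need to be carried out to make the proof complete.

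A secondary point: the paper's route bypasses the fiber-cardinality computation altogether, so it is shorter and avoids the lattice-point analysis entirely. Your route, if completed, would give the stronger and more explicit statement that each Betti fiber is exactly $\{c_i\mathbf e_i,\mathbf v_i\}$ (and in particular would rederive, rather than assume, the pairwise distinctness of the Betti degrees), which is of independent interest; but as written the proof is incomplete.
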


\begin{proof}
Let us see that every critical ideal of Northcott type has a unique minimal system of binomial generators (up to signs). By Theorem \ref{th:gen-N}, there exist a finite subset of a group, $\mathcal{A} = \{\mathbf{a}_1, \ldots, \mathbf{a}_n\},$ such that $\mathbb Z \mathcal A$ has rank $1$ and $I_\mathcal{A} = \langle x_i^{c_i} - \mathbf{x}^{\mathbf{u}_i} \mid \mathbf{u}_i \in \mathbb{N}^n\!, i \in\{1, \ldots, n\} \rangle$, where $c_i \mathbf{e}_i - \mathbf{u}_i$ is the $i$th row of the matrix $M$ in the proof of Theorem \ref{th:gen-N}. If $c_i \mathbf{a}_i \neq c_j \mathbf{a}_j$ for every $i \neq j$, by \cite[Corollary 5]{GSOj}, then $I_\mathcal{A}$ is uniquely generated and we are done. Otherwise, $\mathbf{x}^{\mathbf{u}_i} - \mathbf{x}^{\mathbf{u}_j} \in I_\mathcal{A}$ for some $i \neq j$. By construction, the supports of $\mathbf{u}_i$ and $\mathbf{u}_j$ are not disjoint. So, there exists $k$ and $\alpha_k \in \mathbb{N}$ such that $\mathbf{x}^{\mathbf{u}_i} - \mathbf{x}^{\mathbf{u}_j} = x_k^{\alpha_k}(\mathbf{x}^{\mathbf{u}'_i} - \mathbf{x}^{\mathbf{u}'_j})$. As $\mathbb N\mathcal A$ is cancellative, $\mathbf{x}^{\mathbf{u}'_i} - \mathbf{x}^{\mathbf{u}'_j} \in I_\mathcal{A}$. But this is a contradiction with $\{x_i^{c_i} - \mathbf{x}^{\mathbf{u}_i} \mid \mathbf{u}_i \in \mathbb{N}^n\!, i \in \{1, \ldots, n\}\}$ being a Gr\"obner basis, as it was demostrated in the proof of Theorem \ref{th:gen-N}.
\end{proof}

\subsection*{On the primary decomposition of critical ideals of Northcott type}\mbox{}\par

\medskip
Let $\Phi$ and $\mathbf m$ be as in Definition \ref{Def CINT}, and let $J$ be the corresponding critical ideal of Northcott type. By Theorem \ref{th:gen-N}, there exists a subset $\mathcal{A} = \{\mathbf{a}_1, \ldots, \mathbf{a}_n\}$ of a finitely generated commutative group such that $\mathbb Z \mathcal A$ has rank $1$ and $J = I_\mathcal{A}$.

Let $\mathcal{M}$ be  the subgroup of $\mathbb{Z}^n$ generated by the columns of the matrix $M$ in the proof of Theorem \ref{th:gen-N} and $\rho : \mathcal{M} \to \Bbbk^*$ is the group homomorphism such that $\rho(\mathbf{v}) = 1,$ for all $\mathbf{v} \in \mathcal{M}$. Then $$I_\mathcal{A} = I_+(\rho) := \big\langle \{\mathbf{x}^{\mathbf{v}_+} + \rho(\mathbf{v}) \mathbf{x}^{\mathbf{v}_-}\ \mid\ \mathbf{v} \in \mathcal{M}\} \big\rangle,$$ where $\mathbf{v}_+$ and $\mathbf{v}_- \in \mathbb{N}^n$ denote the positive part and negative part of $\mathbf{v}$, respectively. 

Now, by simply studying the group structure of the finite group $\mathbb{Z}^n/\mathcal{M}$, one can explicitly describe
the radical, the associated primes and a minimal primary decomposition of a lattice ideal using the method developed by \cite{Eisenbud96} when $\Bbbk$ is algebraically closed.

We summarize the mentioned descriptions as in \cite{OjPie1}.

\begin{definition}
If $\mathcal{L}$ is a subgroup of $\mathbb{Z}^n,$ then the saturation of $\mathcal{L}$ is $$ \mathrm{Sat}(\mathcal{L}) := \{ \mathbf{v} \in \mathbb{Z}^n \mid d\, \mathbf{v} \in \mathcal{L} \mbox{ for some } d \in \mathcal{L} \setminus \{0\} \}. $$ We say that $\mathcal{L}$ is saturated if $\mathcal{L}=\mathrm{Sat}(\mathcal{L}).$
\end{definition}

Note that the group $\mathrm{Sat}(\mathcal{M})/\mathcal{M}$ is finite and $\mathrm{Sat}(\mathcal{M}) \cong \mathbb{Z}$.

\begin{proposition}\label{Prop PrimeSat}
The ideal $I_+(\rho)$ is prime if and only if $\mathcal{M}$ is saturated.
\end{proposition}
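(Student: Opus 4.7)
The plan is to reduce Proposition~\ref{Prop PrimeSat} to the basic fact recalled in Section~\ref{sec:prelim}: $I_\mathcal{A}$ is prime if and only if $\mathbb{Z}\mathcal{A}$ is torsion free. The bridge is the isomorphism $\mathbb{Z}^n/\mathcal{M} \cong \mathbb{Z}\mathcal{A}$ coming out of Theorem~\ref{th:gen-N}.

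First, I would invoke Theorem~\ref{th:gen-N} to produce $\mathcal{A} = \{\mathbf{a}_1, \ldots, \mathbf{a}_n\}$ with $I_+(\rho) = I_\mathcal{A}$. Reading the proof of that theorem, the lattice $\mathcal{M}$ appearing in the statement of the present proposition is exactly $\{\mathbf{v} - \mathbf{v}' \mid A\mathbf{v} = A\mathbf{v}'\} = \ker\bigl(\varphi \colon \mathbb{Z}^n \to \mathbb{Z}\mathcal{A},\ \mathbf{e}_i \mapsto \mathbf{a}_i\bigr)$, so the first isomorphism theorem gives $\mathbb{Z}^n/\mathcal{M} \cong \mathbb{Z}\mathcal{A}$.

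Second, I would unwind the definition of saturation. A class $\bar{\mathbf{v}} \in \mathbb{Z}^n/\mathcal{M}$ is torsion if and only if $d\mathbf{v} \in \mathcal{M}$ for some nonzero $d \in \mathbb{Z}$, which is precisely the condition $\mathbf{v} \in \mathrm{Sat}(\mathcal{M})$. Hence $\mathbb{Z}^n/\mathcal{M}$ is torsion free exactly when $\mathcal{M} = \mathrm{Sat}(\mathcal{M})$, and transporting this through the isomorphism above, exactly when $\mathbb{Z}\mathcal{A}$ is torsion free.

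Chaining these equivalences with the fact recalled in Section~\ref{sec:prelim} yields
\[
I_+(\rho) \text{ prime} \ \Longleftrightarrow\ I_\mathcal{A} \text{ prime} \ \Longleftrightarrow\ \mathbb{Z}\mathcal{A} \text{ torsion free} \ \Longleftrightarrow\ \mathcal{M} \text{ saturated},
\]
which is the claim. There is no genuine obstacle: the proposition is essentially a repackaging of standard lattice-ideal material. The only point requiring care is verifying, from the explicit construction in the proof of Theorem~\ref{th:gen-N}, that $\mathcal{M}$ really is the full kernel lattice of $\varphi$ (and not a proper sublattice), but that identification is built into how $\mathcal{A}$ is defined there via the Smith normal form of $M$.
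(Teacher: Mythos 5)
Your proof is correct, and it takes a genuinely different route from the paper's. The paper disposes of Proposition~\ref{Prop PrimeSat} in one line by citing Theorem~2.1(c) of Eisenbud--Sturmfels \cite{Eisenbud96}, which is the general statement that a lattice ideal $I_+(\rho)$ over a field is prime precisely when the lattice is saturated. You instead assemble the equivalence from pieces already present in the paper: the identification $I_+(\rho) = I_\mathcal{A}$ from Theorem~\ref{th:gen-N}, the first-isomorphism-theorem observation $\mathbb{Z}^n/\mathcal{M} \cong \mathbb{Z}\mathcal{A}$ (using that $\mathcal{M}$ is exactly the kernel lattice, as verified in the proof of Theorem~\ref{th:gen-N}), the elementary fact that saturation of $\mathcal{M}$ is torsion-freeness of $\mathbb{Z}^n/\mathcal{M}$, and the statement from Section~\ref{sec:prelim} that $I_\mathcal{A}$ is prime iff $\mathbb{Z}\mathcal{A}$ is torsion free. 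Your version is more self-contained and makes transparent \emph{why} the criterion holds for this particular lattice, whereas the paper's citation is shorter and leans on the full generality of the Eisenbud--Sturmfels machinery (which of course is also what underlies the recalled fact from Section~\ref{sec:prelim}, so the two arguments are not independent in spirit, only in presentation). One small caveat worth noting: the paper's stated definition of $\mathrm{Sat}(\mathcal{L})$ has a typo (``$d \in \mathcal{L}\setminus\{0\}$'' should read ``$d \in \mathbb{Z}\setminus\{0\}$''); you correctly use the intended definition in your unwinding step.
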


\begin{proof} 
This is a consequence of Theorem 2.1(c) in \cite{Eisenbud96}.
\end{proof}

\begin{definition}
If $\mathcal{L}$ is a subgroup of $\mathbb{Z}^n$ and $p$ is a prime number, we define $\mathrm{Sat}_p(\mathcal{L})$ and ${\rm Sat'}_p(\mathcal{L})$ to be the largest sublattices of $\mathrm{Sat}(\mathcal{L})$
containing $\mathcal{L}$ such that $\mathrm{Sat}_p(\mathcal{L})/\mathcal{L}$ has order a power of
$p$ and $\mathrm{Sat'}_p(\mathcal{L})/\mathcal{L}$ has order relatively prime to $p.$ If
$p=0,$ we adopt the convention that $\mathrm{Sat}_p(\mathcal{L})=\mathcal{L}$ and ${\rm
Sat'}_p(\mathcal{L})=\mathrm{Sat}(\mathcal{L}).$
\end{definition}

\begin{theorem}\label{Corolario 2.2}
Assume $\Bbbk$ algebraically closed and $\mathrm{char}(\Bbbk)=p \geq 0.$  Write $g$ for the order of $\mathrm{Sat'}_p(\mathcal{M})/\mathcal{M}.$ There are $g$ distinct group homomorphisms $\rho_1, \ldots, \rho_g$ from $\mathrm{Sat'}_p(\mathcal{M})$ to $\Bbbk^*$ extending $\rho$ and, for each $j$ a homomorphism $\rho'_j$ from $\mathrm{Sat}(\mathcal{M})$ to $\Bbbk^*$ extending $\rho_j.$ There is a unique group homomorphism $\rho'$ from $\mathrm{Sat}_p(\mathcal{M})$ to $\Bbbk^*$ extending $\rho.$ The radical, associated primes and minimal primary decomposition of
$I_\mathcal{A} = I_+(\rho)$ are: $$ \sqrt{I_+(\rho)}=I_+(\rho'), $$ $$
\mathrm{Ass}(S/I_+(\rho))= \{ I_+(\rho'_j) \mid j=1,\ldots, g \} $$
and $$ I_+(\rho) = \bigcap_{j=1}^g I_+(\rho_j) $$ where
$I_+(\rho_j)$ is $I_+(\rho'_j)$-primary. In particular, if $p=0,$
then $I_+(\rho)$ is a radical ideal. The associated primes
$I_+(\rho'_j)$ of $I_+(\rho)$ are all minimal and have the same
codimension $\mathrm{rank}(L_\rho).$
\end{theorem}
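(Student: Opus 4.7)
The plan is to recognise the statement as a direct specialisation of the structure theorem for partial character (lattice) ideals proved by Eisenbud and Sturmfels in \cite{Eisenbud96}, after translation to our notation. First I would verify that $\rho:\mathcal{M}\to \Bbbk^*$ is a partial character on $\mathbb{Z}^n$ in their sense, so that $I_+(\rho)=I_\mathcal{A}$ coincides with the associated lattice ideal; this is pure bookkeeping, since the definition of $I_+(\rho)$ above (a subgroup $\mathcal{M}\subseteq \mathbb{Z}^n$ together with a homomorphism to $\Bbbk^*$) mirrors theirs verbatim.

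Second, the existence and enumeration of the extensions $\rho_j$, $\rho'_j$ and $\rho'$ is pure character theory for finite abelian groups. The finite quotient $\mathrm{Sat}(\mathcal{M})/\mathcal{M}$ decomposes canonically as the direct sum of its $p$-Sylow subgroup $\mathrm{Sat}_p(\mathcal{M})/\mathcal{M}$ and its $p$-regular part $\mathrm{Sat}'_p(\mathcal{M})/\mathcal{M}$. Because $\Bbbk$ is algebraically closed of characteristic $p$, the multiplicative group $\Bbbk^*$ contains exactly $m$ distinct $m$th roots of unity whenever $\gcd(m,p)=1$, and no non-trivial element of order $p$. Thus the standard character extension lemma yields exactly $g=|\mathrm{Sat}'_p(\mathcal{M})/\mathcal{M}|$ extensions $\rho_1,\ldots,\rho_g$ of $\rho$ to $\mathrm{Sat}'_p(\mathcal{M})$ and a unique extension $\rho'$ of $\rho$ to $\mathrm{Sat}_p(\mathcal{M})$; pasting these two extensions yields a unique further lift $\rho'_j$ of each $\rho_j$ to the full saturation $\mathrm{Sat}(\mathcal{M})$.

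Third, with the characters in place, the identification of $\sqrt{I_+(\rho)}$, of the associated primes, and of the minimal primary decomposition follows from the main structure theorems of \cite{Eisenbud96}: each $I_+(\rho'_j)$ is prime by Proposition \ref{Prop PrimeSat} (applied to the saturated lattice $\mathrm{Sat}(\mathcal{M})$ equipped with $\rho'_j$), each $I_+(\rho_j)$ is $I_+(\rho'_j)$-primary, and the intersection $I_+(\rho)=\bigcap_{j=1}^g I_+(\rho_j)$ is minimal. The codimension statement is then immediate since saturation preserves the rank of $\mathcal{M}$. In characteristic zero the convention $\mathrm{Sat}_p(\mathcal{M})=\mathcal{M}$ forces $\rho'=\rho$, so the primary decomposition is already an intersection of primes and hence $I_+(\rho)$ is radical. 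The only real obstacle is the notational translation between \cite{Eisenbud96} and the present paper; once that is carried out, the proof reduces to a direct reference.
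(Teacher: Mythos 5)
Your proposal is correct and takes essentially the same approach as the paper: the paper's entire proof is the citation of Corollaries~2.2 and~2.5 of Eisenbud and Sturmfels, and your argument is precisely the unwinding of that citation (translating $\rho$ into a partial character, enumerating the extensions via standard character theory for finite abelian groups and the $p$-torsion structure of $\Bbbk^*$, and then invoking their structure theorems for the radical, associated primes, and primary decomposition). The extra detail you supply on the existence and uniqueness of the lifts $\rho'$, $\rho_j$, $\rho'_j$ is sound and matches the mechanism in \cite{Eisenbud96}.
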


\begin{proof} Corollary 2.2 and 2.5 in \cite{Eisenbud96}.
\end{proof}

In the light of the theorem above, it suffices to know the
extensions of $\rho$ to $\mathrm{Sat}_p(\mathcal{M}),$ $\mathrm{Sat}(\mathcal{M})$ and $\ \mathrm{Sat'}_p(\mathcal{M})$ to find the radical, associated primes and the minimal primary decomposition of a lattice ideal, respectively.

\begin{example}
Let $$\Phi = \left(\begin{array}{cccc}
x_1^2 & 0 & - x_4 \\
- x_4^2 & x_2^2 & 0  \\
0 & - x_4 & x_3^4 
\end{array}\right)
$$
and $\mathbf{m} = (x_1, x_2^2, x_3).$ In this case, 
$$
M = 
\left(\begin{array}{rrrr}
     3 &     0 &    -1 &    -1\\
    -1 &    4 &   0 &    -2\\
     0 &   -2 &     5 &    -1\\
    -2 &    -2 &    -4 &     4
\end{array}\right).
$$
By computing the Smith normal form of $M$ (using GAP, for instance), we may take $\mathcal{A} = \{(4,14), (5,18), (4,13), (8,29)\} \subset \mathbb{Z}/2\mathbb{Z} \oplus \mathbb{Z}$. 

Since $\mathbb{Z}^4/\mathcal{M} \cong \mathbb{Z} \otimes \mathbb{Z}/2\mathbb{Z}$ the corresponding critical ideal of Northcott type 
$$
I_\mathcal{A} = \langle x_1^3-x_3x_4,x_2^4-x_1x_4^2,x_3^5-x_2^2x_4, x_1^2x_2^2x_3^4-x_4^4
\rangle \subset \Bbbk[x_1, \ldots, x_n],
$$ 
is not prime by Proposition \ref{Prop PrimeSat}. Moreover, by Theorem \ref{Corolario 2.2}, we have that that $I_\mathcal{A}$ is radical and has two associated primes if $\mathrm{car}(\Bbbk) \neq 2$ and $J$ is primary if $\mathrm{car}(\Bbbk) = 2$. 

If $\mathrm{car}(\Bbbk) \neq 2$ the associated primes are minimally generated by
$$
\{ x_1^3-x_3x_4,x_2^4-x_1x_4^2,x_3^5-x_2^2x_4, x_1x_2x_3^2 \pm x_4^2, x_2x_3^3 \pm x_1^2x_4 
\}.
$$
Clearly, they are not critical ideals.
\end{example}

\section{Numerical semigroups of Northcott type}\mbox{}\par

In this section we focus on monoids of Northcott type that are numerical semigroups. Up to isomorphism, this means that the torsion part is equal to zero. We will study the invariants of the semigroup in terms of the corresponding exponents in the Northcott type ideal.

Let us see that is possible to find construct a subset $\mathcal{A}$ of $\mathbb{N}$ such that $\mathbb N \mathcal A$ is a numerical semigroup of Northcott type. Observe that a necessary and sufficient condition for $\mathcal{A}$ to be a system of generators of a numerical semigroup is that the nonzero $d_i$'s appearing in the proof of Theorem \ref{th:gen-N} are equal to $1$, and recall that $d_i$ is equal to greatest common divisor of the $i \times i$ minors of the matrix $M$ in the proof of Theorem \ref{th:gen-N}. 

\begin{proposition}\label{prop:gen-nsgps1}
Let $\Phi$ as in Definition \ref{Def CINT} and $\mathbf m = (x_1, \ldots, x_{n-1})$. With the same notation as in the proof of Theorem \ref{th:gen-N}, we have $d_1 = \dots = d_{n-2} = 1$ and $d_{n-1}$ is equal to \begin{equation}\label{ecu1} \gcd\left(\prod_{i=1}^{n-1} (u_{ni}+1) - 1 ,u_{1n} + \sum_{i=1}^{n-2}\left(\prod_{j=1}^i (u_{nj}+1) \right) u_{i+1\, n} \right).\end{equation}
\end{proposition}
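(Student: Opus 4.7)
The plan is to compute the invariant factors $d_i$ of $M$ directly as ratios of gcds of its minors, exploiting the fact that with $\mathbf m = (x_1, \ldots, x_{n-1})$ the exponents $u_{i+1\, i}$ ($1 \le i \le n-2$) and $u_{1\, n-1}$ all equal $1$.

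For $d_1 = \cdots = d_{n-2} = 1$, I would exhibit a single $(n-2) \times (n-2)$ minor of magnitude $1$: the submatrix of $M$ on rows $\{2, \ldots, n-1\}$ and columns $\{1, \ldots, n-2\}$ is lower triangular with all diagonal entries equal to $-1$, hence has determinant $\pm 1$. Since $d_1 \cdots d_{n-2}$ is the gcd of all $(n-2)$-minors of $M$, it follows that $d_1 = \cdots = d_{n-2} = 1$.

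For $d_{n-1}$, which equals the gcd of all $(n-1)$-minors of $M$, the argument proceeds in two stages. First, because the columns of $M$ sum to zero, the vector $(1, \ldots, 1)$ lies in the left kernel of $M$; combined with $\mathrm{rank}(M) = n-1$, the identity $\mathrm{adj}(M) \cdot M = 0$ forces each row of $\mathrm{adj}(M)$ to be a scalar multiple of $(1, \ldots, 1)$. Consequently the magnitude $|\det(M_{\hat k, \hat j})|$ of an $(n-1)$-minor depends only on the deleted column $j$, and if we set $a_j := (-1)^{n+j}\det(M_{\hat n, \hat j})$, we obtain $d_{n-1} = \gcd(|a_1|, \ldots, |a_n|)$, with $(a_1, \ldots, a_n)$ lying in the right kernel of $M$ by Cramer's rule. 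Second, the right-kernel relation coming from row $k$ of $M$, $2 \le k \le n-1$, reads $a_{k-1} = (u_{nk}+1)\, a_k - u_{kn}\, a_n$; iterating this for $k = n-1, n-2, \ldots, 2$ expresses every $a_j$ with $j \le n-2$ as an explicit integer linear combination of $a_{n-1}$ and $a_n$, whence $\gcd(|a_1|, \ldots, |a_n|) = \gcd(|a_{n-1}|, |a_n|)$.

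It only remains to compute $a_n$ and $a_{n-1}$ in closed form. Expanding $\det(M_{\hat n, \hat n})$ along its last column (which has only two nonzero entries, $-1$ at the top and $u_{n\, n-1}+1$ at the bottom, with the remaining cofactors being lower-triangular) immediately gives $a_n = \prod_{i=1}^{n-1}(u_{ni}+1) - 1$. For $a_{n-1}$, iterating the recursion all the way down yields $a_1 = \prod_{k=2}^{n-1}(u_{nk}+1)\, a_{n-1} - B\, a_n$ with $B = \sum_{l=2}^{n-1} u_{ln} \prod_{j=2}^{l-1}(u_{nj}+1)$; substituting this into the unused row-$1$ relation $a_{n-1} = (u_{n1}+1)\, a_1 - u_{1n}\, a_n$ and using $a_n = \prod_{i=1}^{n-1}(u_{ni}+1) - 1$ causes the $a_{n-1}$-terms on the two sides to cancel, leaving the expression (\ref{ecu1}) for $a_{n-1}$. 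The main obstacle I anticipate is this final telescoping and cancellation step; once the structural observations (the lower-triangular block for $d_1, \ldots, d_{n-2}$, and the left-kernel vector $(1, \ldots, 1)$ for reducing to two minors) are in place, the rest is essentially a careful but routine computation.
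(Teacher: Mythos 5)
Your proposal is correct and follows essentially the same route as the paper: exhibit a unit $(n-2)$-minor to kill $d_1,\ldots,d_{n-2}$, reduce the gcd of $(n-1)$-minors to $\gcd(|a_{n-1}|,|a_n|)$ via the linear recursion among the $a_j$ coming from the kernel relations, and compute $a_n$ and $a_{n-1}$ explicitly. The only differences are cosmetic: you derive $a_{n-1}$ by iterating the recursion and closing the loop with the row-$1$ relation rather than quoting a closed form for all $a_k$, and the triangular $(n-2)$-submatrix on rows $\{2,\ldots,n-1\}$ and columns $\{1,\ldots,n-2\}$ is upper, not lower, triangular (the determinant is still $\pm 1$, so the conclusion stands).
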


\begin{proof}
In this case, the matrix $M$ equals 
\[ M := 
\begin{pmatrix}
u_{n1}+1 & 0 & 0 & \ldots & 0 & -1 & -u_{1n} \\
- 1 & u_{n2}+1 & 0 & \ldots & 0 & 0 & -u_{2n} \\
0 & -1 & u_{n3}+1 & \ldots & 0 & 0 & -u_{3n} \\
\vdots & \vdots & \vdots & & \vdots & \vdots & \vdots \\
0 & 0 & 0 & \ldots & -1 & u_{n\, n-1}+1 & -u_{n-1\, n} \\ 
-u_{n1} & -u_{n2} & -u_{n3} & \ldots & -u_{n\, n-2} & -u_{n\, n-1}  & \sum_{i=1}^{n-1} u_{in}   
\end{pmatrix}.
\] Since the minor obtained by removing the two last rows and columns is equal to one, we have that $d_{n-2} = 1$. We already know that $d_n = 0$, because the last row of $M$ is the sum of the previous ones. So, it suffices to study the $(n-1) \times (n-1)$ minors of the first $n-1$ rows of $M$. Let $a_i$ be absolute value of the minor of $M$ obtained by removing the last row and the $i$th column of $M$. By direct computation, we have that 
\begin{itemize}
\item $a_n=\prod_{i=1}^{n-1} (u_{ni}+1)-1$, 
\item $a_k=u_{\sigma(k)n}+\big(\sum_{i=1}^{n-2}\prod_{j=1}^i (u_{n\, \sigma^{k}(j)} + 1)\big) u_{\sigma^k(i+1)\, n}$, for $k\in\{1,\ldots, n-1\}$, 
\end{itemize}
where $\sigma$ is the permutation defined by the cycle $(1 \cdots n-1)$. Now, since 
\begin{equation}\label{ecu2}(u_{nk} + 1) a_k - a_{\sigma^{-1}(k)} - u_{k n} a_n  = 0,
\end{equation} 
for all $k \in\{1, \ldots, n-1\}$, we easily derive $\gcd(a_1, \ldots, a_{n-1}, a_n) = \gcd(a_{n-2}, a_n)$. 
\end{proof}

\begin{example}
By the above proposition, if we take $\mathbf{m} = (x_1, \ldots, x_{n-1}),\ u_{nj} = 1,\ j = 1, \ldots, n-1,\ u_{in} = 1,\ i = 2, \ldots, n-1$ and $u_{1n} = 2$, then the subsemigroup of $\mathbb{N}$ generated by $$\mathcal{A} = \left\{2^{n-1}+\sum_{k=0}^{n-3} 2^k, 2^{n-1}+\sum_{k=0}^{n-4} 2^k, \ldots,2^{n-1}+1,2^{n-1},2^{n-1}-1\right\}$$ is a numerical semigroup of Norhtcott type.
\end{example}

Let $S$ be a numerical semigroup, and let $z$ be an integer. The \textbf{Ap\'ery set} of $z$ is the set 
\[
\mathrm{Ap}(S,z)=\{s\in S\mid s-z\not\in S\}.
\]
It is well known that the cardinality of $\mathrm{Ap}(S,z)$ is greater than or equal to $z$ and that the equality holds if and only if $z \in S$ (see \cite[Lemma 2.4]{libro-ns}).

\begin{lemma}\label{lem:Apery}
If $\mathbb N \mathcal A$ is a numerical semigroup of Northcott type, then 
\[
\mathrm{Ap}(\mathbb N \mathcal A,a_n)=\left\{ \sum_{i=1}^{n-1}u_ia_i  ~ \middle|~ \begin{matrix}0 \leq u_i < u_{ni}+u_{i+1\, i},\\  i\in\{ 1, \ldots, n-2\},\\ 0 \leq u_{n-1} < u_{n\, n-1}+u_{1\, n-1}\, \\ \text{and}\ u_j < u_{n\,j}\ \text{for some}\ j \end{matrix}\right \}.
\]
\end{lemma}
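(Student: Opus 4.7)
The plan is to exploit the Gröbner basis $\mathcal G = \{f_1, \ldots, f_{n-1}, D\}$ identified in the proof of Theorem \ref{th:gen-N}. Writing $c_i := u_{ni} + u_{i+1,i}$ for $i \in \{1, \ldots, n-2\}$ and $c_{n-1} := u_{n,n-1}+u_{1,n-1}$, the leading monomials of $\mathcal G$ are $x_1^{c_1}, \ldots, x_{n-1}^{c_{n-1}}$ and $x_1^{u_{n1}} \cdots x_{n-1}^{u_{n,n-1}}$; crucially, none of them involves $x_n$. By Macaulay's theorem, a monomial $\mathbf x^{\mathbf v}$ is standard (i.e., irreducible modulo $\mathcal G$) exactly when $v_i < c_i$ for every $i \le n-1$, there is some $j \le n-1$ with $v_j < u_{nj}$, and $v_n$ is arbitrary in $\mathbb N$. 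Composing with the isomorphism $\Bbbk[\mathbf x]/I_\mathcal A \cong \Bbbk[\mathcal A]$, standard monomials are in bijection with $\mathbb N \mathcal A$ via $\mathbf x^{\mathbf v} \mapsto \sum_i v_i a_i$.

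Denote the right-hand side of the asserted equality by $W$, and for each $s \in \mathbb N \mathcal A$ let $\mathbf v(s)$ be the exponent of the unique standard monomial with $\pi(\mathbf x^{\mathbf v(s)}) = \mathbf t^s$. The first claim I would establish is that the writing $s = \sum_{i=1}^{n-1} v_i(s)\, a_i + v_n(s)\, a_n$ is the unique expression of $s$ as $w + k\, a_n$ with $w \in W$ and $k \in \mathbb N$; indeed, the conditions on $(v_1(s),\ldots, v_{n-1}(s))$ inherited from standardness are precisely the $W$-conditions. I would then check that $s \in \mathrm{Ap}(\mathbb N \mathcal A, a_n)$ if and only if $v_n(s) = 0$, i.e., $s \in W$. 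If $v_n(s) \ge 1$, then $s - a_n = \sum_{i=1}^{n-1} v_i(s)\, a_i + (v_n(s)-1)\, a_n \in \mathbb N \mathcal A$, so $s$ is not Apéry. Conversely, if $s = w \in W$ (so $v_n(s)=0$) and $s - a_n = t \in \mathbb N \mathcal A$, then $x_n \cdot \mathbf x^{\mathbf v(t)}$ is again a standard monomial---its first $n-1$ coordinates coincide with those of $\mathbf v(t)$, which already satisfy the $W$-conditions---and represents the same class as $\mathbf x^{\mathbf v(s)}$; uniqueness of normal forms then forces $v_n(s) = v_n(t) + 1 \ge 1$, a contradiction.

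I expect the main subtlety to be the observation that no leading monomial of $\mathcal G$ involves $x_n$. This fact, already present in the Gröbner basis computation of Theorem \ref{th:gen-N}, is what decouples the $x_n$-coordinate of a standard monomial from the $W$-conditions on $(v_1,\ldots,v_{n-1})$ and makes the decomposition $s = w + k\, a_n$ unique; everything else then amounts to routine standard-monomial bookkeeping.
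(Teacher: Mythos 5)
Your proof is correct and follows essentially the same route as the paper: both rely on the Gr\"obner basis $\mathcal{G}=\{f_1,\ldots,f_{n-1},D\}$ from the proof of Theorem \ref{th:gen-N}, observe that its leading monomials $x_1^{c_1},\ldots,x_{n-1}^{c_{n-1}},\ x_1^{u_{n1}}\cdots x_{n-1}^{u_{n\,n-1}}$ do not involve $x_n$, and read off $\mathrm{Ap}(\mathbb N\mathcal A,a_n)$ from the standard monomials with $x_n$-exponent zero. The paper simply delegates the standard-monomial-to-Ap\'ery-set step to \cite[Section 3]{MOT} or \cite[Theorem 3]{OjVi3}, whereas you spell out that step directly; the substance is the same.
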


\begin{proof}
By the proof of Theorem \ref{th:gen-N}, we have that $\mathcal{G} = \{f_1, \ldots, f_{n-1}, D\}$ is a Gr\"obner basis of $I_\mathcal{A}$ with respect to a monomial order on $\Bbbk[\mathbf{x}]$ represented by a matrix whose first row is $(a_1, \ldots, a_n)$ and second row is $(0, \ldots, 0, -1)$. Now, our claim follows from the results in \cite[Section 3]{MOT} or more directly by \cite[Theorem 3]{OjVi3}, by just taking into account that the initial ideal of $I_\mathcal{A}$, with respect to an $\mathcal{A}-$graded reverse lexicographical term order such that $x_n$ is the least variable, is spanned by $\{ x_1^{u_{n1}+u_{21}}, \ldots, x_{n-2}^{u_{n\, n-2}+u_{n-1\, n-2}}, x_{n-1}^{u_{n\, n-1}+u_{1\, n-1}}, x_1^{u_{n1}} \cdots x_{n-1}^{u_{n\, n-1}} \}$, as we have shown in the proof of Theorem \ref{th:gen-N}.
\end{proof}

The \textbf{Frobenius number} of $S$ is the largest integer not in $S$, denoted by $\mathrm F(S)$. The \textbf{genus} of $S$, $\mathrm g(S)$, is the number of gaps of $S$, that is, the cardinality of $\mathbb N\setminus S$. These two invariants can be calculated from $\mathrm{Ap}(S,s)$ for $s\in S\setminus \{0\}$, by using Selmer's formulas (see for instance \cite[Proposition 2.12]{libro-ns}):
\begin{enumerate}
\item $\mathrm F(S)=\max(\mathrm{Ap}(S,s))-s$,
\item $\mathrm g(S)=\frac{1}s\sum_{w\in\mathrm{Ap}(S,s)} w - \frac{s-1}2$.
\end{enumerate}
The \emph{conductor} of $S$ is the Frobenius number of $S$ plus one. We will denote it by $\mathrm c(S)$.

We say that an integer $z$ is a pseudo-Frobenius number if $z \not\in S$ and $z + s \in S$ for all $s \in S \setminus \{0\}$. We will denote by $\mathrm{PF}(S)$ the set of \textbf{pseudo-Frobenius numbers} of $S$, and its cardinality, is the \textbf{type} of $S$, denoted by $\mathrm t(S)$. It is known that the type of a numerical semigroup agrees with the Cohen-Macaulay type of $\Bbbk[\![S]\!]$.

\begin{corollary}\label{lem:pseudoFrob}
If $\mathbb N \mathcal A$ is a numerical semigroup of Northcott type, $\sigma$ is the permutation determined by the cycle $(1\, 2\, \cdots\, n-1)$ and $N = \Big(\sum_{i=1}^{n-2} (u_{ni}+u_{i+1\,i}-1) a_i\Big) + (u_{n\, n-1}+u_{1\, n-1}-1) a_{n-1}$, then 
\[\mathrm{PF}(\mathbb N \mathcal A) = \left\{N - u_{\sigma(j)\,j} a_j-a_n \mid j\in\{1,\ldots ,n-1\} \right\}.\]
In particular, $\mathrm t(\mathbb N \mathcal A) = n-1$, and $\mathrm{F}(\mathbb N \mathcal A) = N - \min\{ u_{\sigma(j)\, j} a_j \mid j \in\{ 1, \ldots, n\}\} -a_n$. 
Moreover,
\begin{align*}
\mathrm{g}(\mathbb N \mathcal A) = &  \sum_{i=1}^{n-1} \frac{(u_{ni}+u_{\sigma(i)\, i}-1)(u_{ni}+u_{\sigma(i)\, i})}{2} \prod_{j \neq i} (u_{nj}+u_{\sigma(j)\, j}) \frac{a_i}{a_n}  \\ & - \sum_{i=1}^{n-1} \left(\frac{(u_{\sigma(i)\,i}-1)u_{\sigma(i)\,i}}{2} + u_{ni} u_{\sigma(i)\, i} \right)\prod_{j \neq i} u_{\sigma(j)\,j} \frac{a_i}{a_n} - \frac{a_n - 1}{2}
\end{align*}
\end{corollary}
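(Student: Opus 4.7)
The plan is to derive $\mathrm{PF}(\mathbb N\mathcal A)$ from Lemma~\ref{lem:Apery} by locating the maximal elements of $\mathrm{Ap}(\mathbb N\mathcal A,a_n)$ with respect to the order $\leq_S$ defined by $w\leq_S w'\iff w'-w\in \mathbb N\mathcal A$: the classical fact (see e.g.\ \cite[Proposition~2.20]{libro-ns}) asserts that $\mathrm{PF}(\mathbb N\mathcal A)=\{w-a_n\mid w\in\mathrm{Max}_{\leq_S}\mathrm{Ap}(\mathbb N\mathcal A,a_n)\}$, and equivalently, $w\in\mathrm{Ap}$ is maximal iff $w+a_i\notin\mathrm{Ap}$ for every $i\in\{1,\ldots,n-1\}$. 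Writing $c_i:=u_{ni}+u_{\sigma(i),i}$, Lemma~\ref{lem:Apery} together with the Gröbner basis description in the proof of Theorem~\ref{th:gen-N} identifies $\mathrm{Ap}(\mathbb N\mathcal A,a_n)$ with the ``staircase'' region
\[
R=\bigl\{(u_1,\ldots,u_{n-1})\in\mathbb N^{n-1}\ \big|\ 0\le u_i< c_i\ \forall i,\ \text{and}\ u_j<u_{nj}\ \text{for some}\ j\bigr\}
\]
through the bijection $(u_1,\ldots,u_{n-1})\mapsto\sum_i u_i a_i$; geometrically, $R$ is the box $\prod_i[0,c_i-1]$ with the inner box $\prod_i[u_{ni},c_i-1]$ removed.

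The central step is to identify the vectors in $R$ whose images are maximal. The key observation is that, for $w=\sum_i u_i a_i\in\mathrm{Ap}$, the element $w+a_i$ again lies in $\mathrm{Ap}$ precisely when $(u_1,\ldots,u_i+1,\ldots,u_{n-1})$ still lies in $R$; the two failure modes correspond to the two ``types'' of generators of the initial ideal, namely $f_i$ (invoked when $u_i+1=c_i$, producing a factor $x_n^{u_{in}}$) and $D$ (invoked when the new vector falls into the inner forbidden box, producing $x_n^{\sum u_{kn}}$). With this dichotomy one checks that the maximal vectors of $R$ are exactly the $n-1$ points
\[
w_j:=(c_1-1,\ldots,c_{j-1}-1,\,u_{nj}-1,\,c_{j+1}-1,\ldots,c_{n-1}-1),\qquad j=1,\ldots,n-1.
\]
For ``$w_j$ is maximal'': incrementing the $j$th coordinate lands inside the inner box (every coordinate becomes $\geq u_{nk}$), and for $i\neq j$ the $i$th coordinate is already $c_i-1$. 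For the converse: given a maximal $w\in R$, one picks some $j$ with $u_j<u_{nj}$ (which exists by the definition of $R$); the failure condition at each $i\neq j$ forces $u_i=c_i-1$, and then the failure at $i=j$ forces $u_j=u_{nj}-1$. This combinatorial bookkeeping on the staircase region is the only genuinely delicate part of the argument.

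Once the $w_j$'s are identified, everything else is arithmetic. The $n-1$ vectors $w_j$ are pairwise distinct (since $u_{\sigma(j),j}\ge 1$ prevents coincidence in the $j$th coordinate), and a direct expansion gives $w_j=\sum_{i\ne j}(c_i-1)a_i+(u_{nj}-1)a_j=N-u_{\sigma(j),j}a_j$; hence
\[
\mathrm{PF}(\mathbb N\mathcal A)=\bigl\{N-u_{\sigma(j),j}a_j-a_n\ \big|\ j=1,\ldots,n-1\bigr\},\qquad \mathrm t(\mathbb N\mathcal A)=n-1,
\]
and $\mathrm F(\mathbb N\mathcal A)=\max\mathrm{PF}(\mathbb N\mathcal A)=N-\min_j u_{\sigma(j),j}a_j-a_n$. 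For the genus I will use Selmer's formula to write $\mathrm g(\mathbb N\mathcal A)=\tfrac{1}{a_n}\sum_{w\in\mathrm{Ap}}w-\tfrac{a_n-1}{2}=\tfrac{1}{a_n}\sum_{i=1}^{n-1}a_i\bigl(\sum_R u_i\bigr)-\tfrac{a_n-1}{2}$; since $R$ is a ``big box minus small box'', each coordinate sum splits as a product and one obtains $\sum_R u_i=\bigl(\prod_{k\ne i}c_k\bigr)\tfrac{(c_i-1)c_i}{2}-\bigl(\prod_{k\ne i}u_{\sigma(k),k}\bigr)\bigl(u_{ni}u_{\sigma(i),i}+\tfrac{(u_{\sigma(i),i}-1)u_{\sigma(i),i}}{2}\bigr)$, which upon substitution reproduces the claimed closed form.
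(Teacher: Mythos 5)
Your proposal is correct and follows the same route as the paper: derive $\mathrm{PF}$ from Lemma~\ref{lem:Apery} via \cite[Proposition~2.20]{libro-ns}, identify the maximal Ap\'ery elements, then apply Selmer's formula for the genus. The only difference is one of exposition: the paper simply says the claim ``follows from Lemma~\ref{lem:Apery},'' while you spell out the elided combinatorics, namely the identification of $\mathrm{Ap}(\mathbb N\mathcal A,a_n)$ with the staircase region $R$ (box minus inner box), the observation that $w+\mathbf a_i\notin\mathrm{Ap}$ iff incrementing the $i$th coordinate leaves $R$, and the determination of the $n-1$ maximal lattice points $w_j$. That identification does tacitly use that every reduction by a Gr\"obner basis element strictly increases the $x_n$-exponent, so once a reduction is triggered the normal form must involve $x_n$; you allude to this (``producing a factor $x_n^{u_{in}}$'') but it is worth stating explicitly. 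Your distinctness argument (injectivity of $R\to\mathrm{Ap}$, so distinct vectors $w_j$ give distinct Ap\'ery elements) is cleaner than the paper's appeal to minimality of $c_j$, and the box-minus-box computation of $\sum_{R}u_i$ reproduces the genus formula exactly.
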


\begin{proof}
By \cite[Proposition 2.20]{libro-ns}, we have that $$\mathrm{PF}(\mathbb N \mathcal A) = \{w - a_n \mid w \in \mathrm{Maximals}_{\leq_\mathcal{A}} \mathrm{Ap}(\mathbb N \mathcal A, a_n) \}.$$ Now, by Lemma \ref{lem:Apery}, our claim on the set of pseudo-Frobenius numbers follows. Notice that $N - u_{\sigma(j)\,j} a_j-a_n \neq N - u_{\sigma(k)\,k} a_k-a_n$ if $k \neq j$; 
otherwise $u_{\sigma(j)\,j} a_j = u_{\sigma(k)\,k} a_k$ in contradiction with the minimality of $c_j = u_{nj}+u_{\sigma(j)\, j}$. 

From the definition of pseudo-Frobenius number we get that $\mathrm{F}(S) = \max(\mathrm{PF}(S))$, for any numerical semigroup $S$. Finally, the last formula is a consequence of Selmer's formula for the genus of a numerical semigroup.
\end{proof}

In the case $\mathbf m = (x_1, \ldots, x_{n-1})$, we can simplify the formulae in Corollary \ref{lem:pseudoFrob} as follows: 
\[\mathrm{PF}(\mathbb N\mathcal A)=\left\{\left(-1+\sum\nolimits_{i=1}^{n-1}u_{in}\right)a_n-a_j\mid j\in\{1,\ldots ,n-1\}\right\}\] 
and $$\mathrm{g}(\mathbb N \mathcal A)=\frac{a_n-1}{2}\left(-1+\sum\nolimits_{i=1}^{n-1}u_{in}\right).$$ The first formula is a consequence of the equalities $N=\sum_{i=1}^{n-1}u_{ni}a_i =\big(\sum_{i=1}^{n-1}u_{in}\big) a_n$ (notice that $\sum_{i=1}^{n-1}u_{in}$ is the last entry of the last row in the matrix $M$). The proof of the formula for the genus requires some extra computations:
\begin{align*}
\mathrm{g}(\mathbb N \mathcal A) = &  \sum_{i=1}^{n-1} \frac{u_{ni}(u_{ni}+1)}{2} \prod_{j \neq i} (u_{nj}+1) \frac{a_i}{a_n}- \sum_{i=1}^{n-1} u_{ni} \frac{a_i}{a_n} - \frac{a_n - 1}{2} \\
= &  \sum_{i=1}^{n-1} \frac{u_{ni}a_i}{2a_n} \prod_{j=1}^{n-1}(u_{nj}+1)-  \frac{\sum_{i=1}^{n-1} u_{ni}a_i}{a_n} - \frac{a_n - 1}{2} \\
= &  \prod_{j=1}^{n-1}(u_{nj}+1)\frac{\sum_{i=1}^{n-1} u_{in}}{2}-\sum_{i=1}^{n-1} u_{in}-\frac{a_n - 1}{2} \\
= &  (a_n+1)\frac{\sum_{i=1}^{n-1} u_{in}}{2}-\sum_{i=1}^{n-1} u_{in}-\frac{a_n - 1}{2} \\
= &  \left(\frac{a_n+1}{2}-1\right)\sum_{i=1}^{n-1} u_{in}-\frac{a_n - 1}{2} =  \frac{a_n-1}{2}\left(-1+\sum_{i=1}^{n-1}u_{in}\right).
\end{align*}

Wilf in \cite{wilf} conjectured that for a numerical semigroup $S$, $\mathrm c(S)\leq \mathrm e(S)\mathrm n(S)$, where $\mathrm n(S)$ stands for the number of elements in the semigroup less than $\mathrm F(S)$. We see next that Wilf's conjecture holds for numerical semigroups of Northcott type when $\mathbf m = (x_1, \ldots, x_n)$.

Observe that $\mathrm n(S)=\mathrm c(S)-\mathrm g(S)=\mathrm F(S)+1-\mathrm g(S)$.
\begin{multline*}   
\mathrm e(\mathbb N\mathcal A)\mathrm n(\mathbb N\mathcal A)-\mathrm c(\mathbb N\mathcal A)
= \left(-1+\sum_{i=1}^{n-1}u_{in}\right)\cdot\frac{(2n-3)a_n-1}{2}+(n-1)(1-\min\{a_j\})\\
=\left(-1+\sum_{i=1}^{n-1}u_{in}\right)a_n+\left(-1+\sum_{i=1}^{n-1}u_{in}\right)\cdot\frac{(2n-5)a_n-1}{2}+(n-1)(1-\min\{a_j\}).
\end{multline*}
As we have 
\begin{align*}
\left(\sum\nolimits_{i=1}^{n-1}u_{in}\right)a_n &=\sum\nolimits_{i=1}^{n-1}(c_{i}-1)a_i\geq \sum\nolimits_{i=1}^{n-1}a_i>(n-1)\min\{a_j\}\\ & >(n-1)(\min\{a_j\}-1),
\end{align*} 
we deduce  
\begin{align*}
\mathrm e(\mathbb N\mathcal A)\mathrm n(\mathbb N\mathcal A)-\mathrm c(\mathbb N\mathcal A) & >-a_n+\left(-1+\sum\nolimits_{i=1}^{n-1}u_{in}\right)\cdot\frac{(2n-5)a_n-1}{2}\\
& \geq(n-1)\frac{(2n-5)a_n-1}{2}-a_n.
\end{align*}
For this last inequality we have used that $u_{in}\geq 1$, and that not all of them can be equal to one.

If $n\ge 4$, $(n-1)\frac{(2n-5)a_n-1}{2}-a_n\geq 2\cdot \frac{a_n-1}{2}-a_n=-1$,
and so we conclude $\mathrm e(\mathbb N\mathcal A)\mathrm n(\mathbb N\mathcal A)-\mathrm c(\mathbb N\mathcal A) > -1$. Hence $\mathrm c(\mathbb N\mathcal A)\le \mathrm e(\mathbb N\mathcal A)\mathrm n(\mathbb N\mathcal A)$.

If $n=3$, $u_{in}=1$ for $i\in\{1,\ldots n-2\}$, $u_{n-1\,n}=2$, and $c_i=2$ for every $i\in\{1,\ldots n-1\}$. We obtain a unique semigroup: $\langle 3,4,5\rangle$, and Wilf's conjecture holds for this semigroup.	

\begin{example}
Let $n = 5$. With the same notation as in Proposition \ref{prop:gen-nsgps1}, if $u_{5i} = i$, $i\in\{ 1, \ldots, 4\}$,  $u_{15} = 5$ and $u_{i5} = i$, $i \in\{ 2, \ldots, 4\}$, then \[I_\mathcal{A} = \langle x_3^4-x_2x_5^3, x_2^3-x_1x_5^2, x_4^5-x_3x_5^4, x_1^2-x_4x_5^5, x_1x_2^2x_3^3x_4^4-x_5^{14} \rangle \subset \Bbbk[x_1,\ldots, x_5]\] and $\mathcal{A} = \{359,199,139,123,119\}$. It is easy, from the last remark, to obtain
\begin{align*}
\mathrm{PF}(\mathbb N\mathcal A&=\{119\cdot 13-359,119\cdot 13-199,119\cdot 13-139,119\cdot 13-123\}\\ &=\{1188, 1348, 1408, 1424\}
\end{align*} 
and $\mathrm{g}(\mathbb N \mathcal A)=(14-1)(119-1)/2=767$.
\end{example}

\section{Nonunique factorization invariants for Northcott type semigroups}\label{sec:fact}

Some nonunique factorization invariants can be derived from a minimal presentation of the monoid (see for instance \cite{fact-review}). The maximum and minimum of the Delta set of the monoid and the catenary degree can be obtained once we know the elements in the monoid involved in a minimal presentation.

Let $G$ be a finitely generated commutative group, and let $\mathcal{A} = \{\mathbf{a}_1, \ldots, \mathbf a_n\} \subset G$ be such that $\mathbb{N}\mathcal{A}$ is unit free. Recall that an element $\mathbf{b}\in \mathbb{N}\mathcal{A}$ is a \textbf{Betti element} of $\mathbb{N}\mathcal{A}$ if there exists $\mathbf{x}^{\mathbf{u}}-\mathbf{x}^\mathbf{v}$ with ${A}\mathbf{u}=\mathbf{b}$($=A\mathbf{v}$), where as usual $A$ denotes the matrix with columns the elements of $\mathcal{A}$. 

For $\mathbf{s}\in \mathbb{N}\mathcal{A}$, the \textbf{set of factorizations} of $\mathbf{s}$ is the set 
\[
\mathsf{Z}(\mathbf{s})= \{\mathbf{u}\in\mathbb{N}^n \mid A\mathbf{u}=\mathbf{s}\}.
\]
As $\mathbb{N}\mathcal{A}$ is unit free, the set $\mathsf{Z}(\mathbf{s})$ has finitely many elements (this follows by Dickson's Lemma, since they are all incomparable with respect to the usual partial order on $\mathbb{N}^n$). The monoid $\mathbb{N}\mathcal{A}$ is a \textbf{unique factorization monoid} if every element has a unique factorization. Notice that a semigroup of Northcott type is not a unique factorization monoid, since its Betti elements have at least two factorizations.

The \textbf{length} of a factorization $\mathbf{u}=(u_1,\ldots,u_n)$ of $\mathbf{s}$ is defined as $|\mathbf{u}|=u_1+\cdots+u_n$. The set of lengths of factorizations of $\mathbf{u}$ is thus defined as 
\[
\mathsf{L}(\mathbf{s})=\{ \lvert\mathbf{u}\rvert \mid \mathbf{u}\in \mathsf{Z}(\mathbf{s})\}.
\]
The monoid $\mathbb{N}\mathcal{A}$ is said to be \textbf{half-factorial} if for all $\mathbf s\in\mathbb{N}\mathcal{A}$ the set $\mathsf{L}(\mathbf{s})$ is a singleton. A way to measure how far a monoid is from being half-factorial is by means of the distances between consecutive factorization lengths. This is precisely the idea under the definition of Delta sets.

Since $\mathsf{L}(\mathbf{s})$ has finitely many elements, we can arrange its elements and write it as $\mathsf{L}(\mathbf{s})=\{l_1<\cdots<l_t\}$. The \textbf{Delta set} of $\mathbf{s}$ is then defined as 
\[
\Delta(\mathbf{s})=\{ l_2-l_1,\ldots, l_t-l_{t-1}\}.
\]
The \textbf{Delta set} of $\mathbb{N}\mathcal{A}$ is 
\[
\Delta(\mathbb{N}\mathcal{A}) = \bigcup_{\mathbf{s}\in \mathbb{N}\mathcal{A}}\Delta(\mathbf{s}).
\]

\begin{proposition}\label{prop:delta-nt}
Let $G$ be a finitely generated commutative group, and let $\mathcal{A} = \{\mathbf{a}_1, \ldots, \mathbf a_n\} \subset G$ be such that $\mathbb{N}\mathcal{A}$ is of Northcott type. Let $u_{ij}$ be as in \eqref{ecPhi}. Then 
\begin{align*}
\min(\Delta(\mathbb{N}\mathcal{A})) = \gcd(\{&|u_{n1}+u_{21}-(u_{1\,n-1}+u_{i1})|, |u_{n2}+u_{32}-(u_{21}+u_{2n})|,
\\ &\ldots, |u_{n\,n-1}+u_{1\, n-1}-(u_{n-1\,n-2}+u_{n-1\,n})|,
\\ &|\sum\nolimits_{i=1}^{n-1} u_{in}-\sum\nolimits_{i=1}^{n-1} u_{ni}|\}),
\end{align*}
and 
\begin{align*}
\max(\Delta(\mathbb{N}\mathcal{A})) =
\max(\{&|u_{n1}+u_{21}-(u_{1\,n-1}+u_{i1})|, |u_{n2}+u_{32}-(u_{21}+u_{2n})|,
\\ &\ldots, |u_{n\,n-1}+u_{1\, n-1}-(u_{n-1\,n-2}+u_{n-1\,n})|,
\\ &|\sum\nolimits_{i=1}^{n-1} u_{in}-\sum\nolimits_{i=1}^{n-1} u_{ni}|\}).
\end{align*}
\end{proposition}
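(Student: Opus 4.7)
The plan hinges on Corollary~\ref{cor:uniq-pres}: $\mathbb{N}\mathcal{A}$ is uniquely presented, so (up to signs) the set $\rho := \{f_1,\ldots,f_{n-1},D\}$ from Definition~\ref{Def CINT} is the unique minimal presentation of $\mathbb{N}\mathcal{A}$. A direct inspection of each binomial $f_i$ (respectively $D$), reading the $i$-th row (respectively the last row) of the matrix $M$ in the proof of Theorem~\ref{th:gen-N}, shows that the absolute differences between the total degrees of the two monomials of the binomials in $\rho$ are exactly the $n$ quantities listed in the statement.

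For the minimum, I would invoke the theorem (see \cite{fact-review} and the references therein) that for any finitely generated cancellative commutative monoid $S$ with a minimal presentation $\rho$, one has $\min\Delta(S) = \gcd\Delta(S) = \gcd\{||\mathbf{u}|-|\mathbf{v}||:(\mathbf{u},\mathbf{v})\in\rho\}$. Applied to our uniquely determined $\rho$, this yields the claimed formula at once.

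For the maximum, the lower bound is immediate: since $\rho$ is the unique minimal presentation, each Betti element of $\mathbb{N}\mathcal{A}$ has exactly two factorizations (the two monomials of its defining binomial in $\rho$), hence its Delta set is a singleton realizing the corresponding length difference. For the upper bound, given $\mathbf{s}\in\mathbb{N}\mathcal{A}$ and two consecutive lengths $l<l'$ in $\mathsf{L}(\mathbf{s})$, I would use the connectivity of the factorization graph $\nabla_{\mathbf{s}}$ under trades from $\rho$: joining a factorization of length $l$ to one of length $l'$ by a path $\mathbf{u}_0, \ldots, \mathbf{u}_k$, the largest index $j$ with $|\mathbf{u}_j|\le l$ satisfies $|\mathbf{u}_{j+1}|\ge l'$ (by consecutiveness of $l$ and $l'$ in $\mathsf{L}(\mathbf{s})$), so $l'-l\le |\mathbf{u}_{j+1}|-|\mathbf{u}_j|$, which is one of our $n$ length differences and hence bounded above by the claimed maximum.

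The main obstacle is verifying the connectivity statement used in the upper bound argument, namely that the factorization graph of any element, with edges induced by the minimal presentation $\rho$, is connected. This is a classical fact for finitely generated cancellative commutative monoids (collected in \cite{fact-review}), but its invocation is essential to the argument; once granted, the rest of the proof is the bookkeeping outlined above.
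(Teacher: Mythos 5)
Your proposal is correct and follows essentially the same route as the paper: both use Corollary~\ref{cor:uniq-pres} to identify the minimal presentation (hence the length differences of the defining binomials) and then appeal to the general facts that $\min\Delta(S)=\gcd\Delta(S)$ is the gcd of those differences and that $\max\Delta(S)$ is attained at the Betti elements, which the paper delegates to \cite[Corollary 2.4, Theorem 2.5]{delta-bf}. The only difference is that you sketch a self-contained connectivity argument for the upper bound on $\max\Delta$ instead of citing it; that argument is sound, since connectedness of the factorization graph of each element under $\rho$-trades is exactly the statement that $\rho$ generates the kernel congruence.
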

\begin{proof}
By Corollary \ref{cor:uniq-pres}, $\mathbb{N}\mathcal{A}$ is uniquely presented. In particular, this means that every Betti element of $\mathbb{N}\mathcal{A}$ has exactly two different factorizations, \cite{GSOj}. These factorizations are precisely the exponents of the binomials $f_1,\ldots, f_{n-1},D$. Hence for each Betti element its Delta set is just the length of the difference of the corresponding exponents. With this observation, the proof now follows from \cite[Corollary 2.4, Theorem 2.5]{delta-bf}.
\end{proof}

Now we see that the catenary degree of a semigroup of Northcott type can also be determined from the $u_{ij}$ in \eqref{ecPhi}. The catenary degree measures how spread are the factorizations of the elements of a monoid. 

Given two elements $\mathbf{u},\mathbf{v}\in \mathbb{N}^n$, define $\mathbf{u}\wedge\mathbf{v}$ as the minimum componentwise of $\mathbf{u}$ and $\mathbf{v}$, that is, the infimum in the lattice $\mathbb{N}^n$ endowed with the usual partial ordering. The \textbf{distance} between $\mathbf{u}$ and $\mathbf{v}$ is 
\[
\mathrm{d}(\mathbf{u},\mathbf{v})=\max\{ |\mathbf{u}-(\mathbf{u}\wedge \mathbf{v})|, |\mathbf{v}-(\mathbf{u}\wedge \mathbf{v})|\}.
\]
Let $N$ be a nonnegative integer, and let $\mathbf{s}\in \mathbb{N}\mathcal{A}$. For $\mathbf{u},\mathbf{v}\in\mathsf{Z}(\mathbf{s})$, an \textbf{$N$-chain} joining $\mathbf{u}$ and $\mathbf{v}$ is a sequence $\mathbf{u}^1,\ldots, \mathbf{u}^k\in\mathsf{Z}(\mathbf{s})$ such that $\mathbf{u}=\mathbf{u}^1$, $\mathbf{v}=\mathbf{u}^k$, and $\mathrm{d}(\mathbf{u}^i,\mathbf{u}^{i+1})\le N$ for all $i\in \{1,\ldots, k-1\}$. 

The \textbf{catenary degree} of $\mathbf{s}$, $\mathsf{c}(\mathbf{s})$, is the least positive integer $c$ such that for every two factorizations of $\mathbf{s}$, there exists a $c$-chain joining them.  The \textbf{catenary degree} of $\mathbb{N}\mathcal{A}$ is the supremum of the catenary degrees of its elements. 

\begin{proposition}\label{prop:catenary-nt}
Let $G$ be a finitely generated commutative group, and let $\mathcal{A} = \{\mathbf{a}_1, \ldots, \mathbf a_n\} \subset G$ be such that $\mathbb{N}\mathcal{A}$ is of Northcott type. Let $u_{ij}$ be as in \eqref{ecPhi}. Then 
\begin{align*}
\mathsf{c}(\mathbb{N}\mathcal{A})) = \max\big(\big\{&\max\{u_{n1}+u_{21},u_{1\,n-1}+u_{i1}\}, \max\{u_{n2}+u_{32},u_{21}+u_{2n}\},
\\ &\ldots, \max\{u_{n\,n-1}+u_{1\, n-1}, u_{n-1\,n-2}+u_{n-1\,n}\},
\\ &\max\{\sum\nolimits_{i=1}^{n-1} u_{in},\sum\nolimits_{i=1}^{n-1} u_{ni}\}\big\}\big).
\end{align*}
\end{proposition}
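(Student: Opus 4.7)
The approach combines the unique presentation of $\mathbb N\mathcal A$ with the standard fact that the catenary degree is attained on Betti elements. First, by Corollary \ref{cor:uniq-pres}, the set $\{f_1,\ldots,f_{n-1},D\}$ is the essentially unique minimal generating set of $I_\mathcal A$, and therefore by \cite{GSOj} each Betti element of $\mathbb N\mathcal A$ admits exactly two factorizations, namely the exponent vectors of the two monomials of one of these binomials. Thus the Betti elements of $\mathbb N\mathcal A$ are $c_i\mathbf a_i$ (coming from $f_i$) for $i\in\{1,\ldots,n-1\}$, and $\sum_{i=1}^{n-1}u_{ni}\mathbf a_i$ (coming from $D$).

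I would then apply the classical identity $\mathsf c(\mathbb N\mathcal A)=\max\{\mathsf c(\mathbf b):\mathbf b \text{ a Betti element of } \mathbb N\mathcal A\}$ (see, e.g., \cite{fact-review}). Since each Betti element has only two factorizations $\mathbf u,\mathbf v$, the unique chain in $\mathsf Z(\mathbf b)$ connecting them is $\mathbf u,\mathbf v$ itself, and consequently $\mathsf c(\mathbf b)=\mathrm d(\mathbf u,\mathbf v)$.

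The key observation that makes these distances trivial to evaluate is that in every binomial $f_i$ and in $D$ the two monomials have disjoint supports: $f_i$ pairs a pure power of $x_i$ with a monomial not involving $x_i$, while $D$ pairs a pure power of $x_n$ with a monomial in $x_1,\ldots,x_{n-1}$. Hence $\mathbf u\wedge\mathbf v=0$ in each case, and so $\mathrm d(\mathbf u,\mathbf v)=\max\{|\mathbf u|,|\mathbf v|\}$. Reading off these lengths directly from Definition \ref{Def CINT} gives precisely the terms listed in the statement, and taking the maximum over the $n$ Betti elements yields the claimed formula.

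No genuine obstacle is expected here; the only subtlety is book-keeping, namely matching each binomial's pair of exponent-sums to the correct term in the right-hand side of the statement. The only external input is the Betti-element reduction for the catenary degree, which is standard for finitely generated cancellative commutative monoids.
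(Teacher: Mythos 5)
Your proof is correct and follows essentially the same approach as the paper's: reduce to Betti elements via unique presentation and Corollary~\ref{cor:uniq-pres}, note that each Betti element has exactly two factorizations given by the exponents of the corresponding binomial, observe that these two factorizations have disjoint supports so the catenary degree of each Betti element is the maximum of the two lengths, and conclude with the Betti-element characterization of the catenary degree (the paper cites \cite[Theorem 3.1]{c-t} for this last step). Your write-up supplies a bit more detail than the paper (which just says ``the proof goes as in Proposition~\ref{prop:delta-nt}''), but the route is identical.
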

\begin{proof}
The proof goes as in Proposition \ref{prop:delta-nt}. Since every Betti element has exactly two factorizations, its catenary degree is the maximum of their lengths. The proof follows by \cite[Theorem 3.1]{c-t}.
\end{proof}

\section{Gluings and Northcott type ideals}\label{sect:gl-nt}

In this section, we give a Northcott type description of the toric ideals generated by (gN1) and (gN2), and we give a method to produce critical binomial ideals for arbitrary $n$ that are not Northcott type in general.

Let $G$ be a finitely generated commutative group and let $\mathcal{A} = \{\mathbf{a}_1, \ldots, \mathbf a_n\} \subset G$ be such that $\mathbb N \mathcal A \cap (- \mathbb N \mathcal A) = \{0\}$. Assume that $\mathcal A=\mathcal A_1\cup\mathcal A_2$ with $\mathcal A_1\neq \varnothing \neq \mathcal A_2$. After reindexing the elements in $\mathcal A$ if necessary, we may suppose that $\mathcal A_1=\{\mathbf a_1,\ldots, \mathbf  a_k\}$ and  $\mathcal A_2=\{\mathbf a_{k+1},\ldots, \mathbf a_n\}$. We say that $\mathbb N\mathcal A$ is a \textbf{gluing} of $\mathbb N\mathcal A_1$ and $\mathbb N\mathcal A_2$ if $I_\mathcal A$ has a system of generators of the form $B_1\cup B_2\cup \{\mathbf x^\mathbf u-\mathbf x^\mathbf v\}$, where $B_1\cup\{\mathbf x^\mathbf u\}\subseteq \Bbbk[x_1,\ldots, x_k]$ and $B_2\cup \{\mathbf x^\mathbf v\}\subseteq \Bbbk[x_{k+1},\ldots, x_n]$. The concept gluing of semigroups was defined by Rosales in \cite{Ros97} for affine semigroups, but the results obtained in the first section of this paper can be applied in our context (see for instance \cite{acc}). We will only consider here proper gluings, that is, $\mathcal A$ will be a minimal generating system of $\mathbb N\mathcal A$.

It follows from the proof of \cite[Theorem 1.4]{Ros97} that if $\mathbb N\mathcal A$ is the gluing of $\mathbb N \mathcal A_1$ and $\mathbb N\mathcal A_2$, and $\mathbf u$ and $\mathbf v$ are as in the preceding paragraph, then $\mathbb Z\mathcal A_1\cap \mathbb Z\mathcal A_2= d\mathbb Z$, where $d=\mathcal A \mathbf u=\mathcal A\mathbf v$. 

If $\mathbb N\mathcal A$ is a numerical semigroup minimally generated by $\mathcal A$, and $d_i=\gcd\mathcal A_i$, $i\in \{1,2\}$, then $\mathbb N\mathcal A$ is a gluing of $\mathbb N\mathcal A_1$ and $\mathbb N\mathcal A_2$ if and only if $d_1\in \mathbb N\mathcal A_2\setminus \mathcal A_2,\ d_2\in\mathbb N\mathcal A_1\setminus \mathcal A_1$ and $\gcd(d_1,d_2)=1$ (see \cite[Chapter 8]{libro-ns}).

In order to simplify the next statements, we will consider (gN1) as a subcase of (gN2) by taking $u_{12} = u_{32} = 0$ and, consequently, $f_1 = D$. Let us suppose now that $J$ is an ideal of $\Bbbk[\mathbf{x}]$ generated by (gN*).

\begin{proposition}\label{prop:gN*}
Let $B$ be the matrix whose rows are the exponent vectors of $f_1, f_2$ $D$, and $g$, that is,
$$
B = \left(\begin{array}{cccc}
c_1 &  -u_{12} & -u_{13} & 0 \\
-u_{21} & c_2 & - u_{23} & 0 \\
-u_{31} & -u_{32} & c_3 & 0 \\
-u_{41} & -u_{42} &  -u_{43} & c_4
\end{array}\right).
$$
Notice that the first and the third rows might define the same binomial.
Then there exists $\mathcal{A} \subset \mathbb{N}$ such that $J = I_\mathcal{A}$ if and only if 
\begin{itemize}
\item[(a)]  $\{f_1, f_2, D\}$ generates a toric ideal in $\Bbbk[x_1, x_2, x_3]$.
\item[(b)] the $(3,j)-$minors of $B$, $j \in\{ 1, \ldots, 4\}$ are relatively prime.
\end{itemize}
\end{proposition}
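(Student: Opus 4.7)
My approach leverages the gluing structure built into the generating set (gN*): the binomials $f_1, f_2, D$ live in $\Bbbk[x_1, x_2, x_3]$, while $g$ is the only generator involving $x_4$. If $J = I_\mathcal{A}$ for some $\mathcal{A} = \{a_1, a_2, a_3, a_4\} \subset \mathbb{N}$, this forces $\mathbb{N}\mathcal{A}$ to be a gluing, in the sense of \cite{Ros97}, of $\mathbb{N}\mathcal{A}_1 := \mathbb{N}\{a_1, a_2, a_3\}$ and $\mathbb{N}\{a_4\}$ via the bridge relation $c_4 a_4 = u_{41}a_1 + u_{42}a_2 + u_{43}a_3$ encoded by $g$. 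The plan is to decouple the equivalence $J = I_\mathcal{A}$ into a toric condition on the $\Bbbk[x_1, x_2, x_3]$-part (yielding (a)) and a saturation condition on the lattice $\mathcal{M}$ of relations spanned by the rows of $B$ (yielding (b)).

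For the forward direction, assume $J = I_\mathcal{A}$. Since $g$ is the only generator of $J$ containing $x_4$, the contraction $J \cap \Bbbk[x_1, x_2, x_3]$ equals $\langle f_1, f_2, D\rangle$. Standard toric elimination gives $I_\mathcal{A} \cap \Bbbk[x_1, x_2, x_3] = I_{\mathcal{A}_1}$, so $\langle f_1, f_2, D\rangle = I_{\mathcal{A}_1}$ is a toric ideal, giving (a). For (b), $I_\mathcal{A}$ being a toric (prime) ideal is equivalent to $\mathcal{M}$ being saturated, by Proposition \ref{Prop PrimeSat}; saturation of a rank-$3$ sublattice of $\mathbb{Z}^4$ is in turn equivalent to the $\gcd$ of the maximal $3 \times 3$ minors of $B$ being $1$. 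The block structure of $B$ (upper-right $3 \times 1$ block equal to zero) together with condition (a) (which forces $\mathrm{rank}(B_1) = 2$, so every $3 \times 3$ minor of $B$ that omits the last row vanishes) pins down exactly four non-trivial maximal minors, namely the $(3, j)$-minors in (b).

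For the converse, assume (a) and (b). By (a) there exist coprime positive integers $a_1, a_2, a_3$ with $(a_1, a_2, a_3)^\top \in \ker(B_1)$ and $I_{\mathcal{A}_1} = \langle f_1, f_2, D\rangle$. Define $a_4$ through $c_4 a_4 := u_{41}a_1 + u_{42}a_2 + u_{43}a_3$; a direct cofactor expansion along the last row of $B$, combined with Cramer's-rule expression of $(a_1, a_2, a_3)$ as minors of $B_1$, identifies each $(3, j)$-minor of $B$ with $\pm c_4 a_j$ for $j = 1, \ldots, 4$. Condition (b) thus simultaneously delivers the integrality of $a_4$ and the primitivity $\gcd(a_1, \ldots, a_4) = 1$. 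Setting $\mathcal{A} := \{a_1, \ldots, a_4\}$, the conditions $c_4 a_4 \in \mathbb{N}\mathcal{A}_1 \setminus \mathcal{A}_1$ and $\gcd(\gcd \mathcal{A}_1, a_4) = 1$ verify the numerical-semigroup gluing criterion of \cite[Chapter 8]{libro-ns}, whence $I_\mathcal{A} = I_{\mathcal{A}_1} + \langle g \rangle = J$.

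The main obstacle lies in the converse: converting the linear-algebraic condition (b) into the two simultaneous arithmetic facts $c_4 \mid u_{41}a_1 + u_{42}a_2 + u_{43}a_3$ and $\gcd(a_1, \ldots, a_4) = 1$. This requires careful tracking of signs and scaling factors in the identification of the four $(3, j)$-minors with $\pm c_4 a_j$, and the (gN1) degeneration (where $f_1 = D$ collapses two rows of $B$) must be handled as a separate case. The overall structure parallels the Bresinsky--Villarreal proof of Proposition \ref{Prop Th5Bresinsky}, but with the rank-$2$ block $B_1$ replacing the globally rank-$3$ matrix of the (N) case.
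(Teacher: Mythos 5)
The paper gives no proof of its own here --- the ``proof'' environment simply cites \cite[Propositions 3.3 and 3.4]{Villarreal} --- so there is no argument in the paper to compare against; I can only assess your blind attempt on its own merits. Your overall strategy, decoupling the statement into a Herzog-type toric condition on the $\Bbbk[x_1,x_2,x_3]$ block and an arithmetic/saturation condition packaged in the $(3,j)$-minors, and then invoking the numerical-semigroup gluing criterion, is the natural one and matches the spirit of the corollary that immediately follows in the paper.

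However, the converse direction has a genuine error in the scaling that makes the argument break down rather than merely be incomplete. You choose $a_1,a_2,a_3$ to be \emph{coprime} positive integers spanning $\ker(B_1)$, then try to define $a_4$ through $c_4a_4:=u_{41}a_1+u_{42}a_2+u_{43}a_3$, claiming that condition (b) delivers the integrality of $a_4$. Under your own identification $(3,j)\text{-minor}=\pm c_4 a_j$, condition (b) reads $\gcd(c_4a_1,c_4a_2,c_4a_3,c_4a_4)=1$; since $\gcd(a_1,a_2,a_3)=1$, this gives $\gcd(c_4,\,u_{41}a_1+u_{42}a_2+u_{43}a_3)=1$, which is precisely the \emph{opposite} of $c_4$ dividing $u_{41}a_1+u_{42}a_2+u_{43}a_3$. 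So (b) guarantees $a_4\notin\mathbb{Z}$ (unless $c_4=1$, which is excluded for a proper gluing). The fix is to take $a_j$ to be the $(3,j)$-minor itself for every $j$: then $\gcd(a_1,a_2,a_3)=c_4$, $a_4=u_{41}(a_1/c_4)+u_{42}(a_2/c_4)+u_{43}(a_3/c_4)$ is automatically an integer, and (b) is exactly $\gcd(c_4,a_4)=1$, which is what the gluing criterion requires. This is spelled out in the corollary following the proposition, where $\mathbb{N}\mathcal{A}$ is a gluing of $c_4\mathbb{N}\mathcal{A}'$ and $a_4\mathbb{N}$ with $\mathcal{A}'=\{a_i/\gcd(a_1,a_2,a_3)\}$.

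Two smaller gaps: (i) the assertion that $J\cap\Bbbk[x_1,x_2,x_3]=\langle f_1,f_2,D\rangle$ ``since $g$ is the only generator containing $x_4$'' is not automatic for ideal contraction --- you need either an elimination Gr\"obner basis argument or \cite[Proposition 4.13]{Sturmfels95} applied to the toric ideal $I_\mathcal{A}$, together with Herzog's description of $3$-generated toric ideals to identify the contraction with $\langle f_1,f_2,D\rangle$; (ii) the claim that rank$(B_1)=2$ ``pins down exactly four non-trivial maximal minors'' is false as stated --- the $(1,j)$- and $(2,j)$-minors of $B$ do not vanish in general. What you actually need is that (when $\langle f_1,f_2,D\rangle$ is not a complete intersection) the three rows of $B_1$ sum to zero, so that rows $1,2,4$ already generate the row lattice of $B$ over $\mathbb{Z}$ and the $(3,j)$-minors alone detect its saturation; the complete-intersection degeneration (gN1), which you flag but do not treat, must be handled separately precisely because this sum-to-zero identity fails there.
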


\begin{proof}
See \cite[Propositions 3.3 and 3.4]{Villarreal}.
\end{proof}

\begin{corollary}
With the above notation, there exists $\mathcal{A} = \{a_1, a_2, a_3, a_4\} \subset \mathbb N$ such that $J = I_\mathcal{A}$ if and only if $\mathbb N \mathcal A$ is a gluing of $c_4 \mathbb N\mathcal{A}'$ and $a_4 \mathbb{N}$, with $\mathcal{A}' = \{  a_i/\mathrm{gcd}(a_1, a_2, a_3) \mid i = 1,2,3\}$. 
\end{corollary}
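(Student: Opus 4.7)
The key observation is that the generating set (gN*) of $J$ is already written in the shape of a gluing: one has $J = \langle B_1\rangle + \langle B_2 \rangle + \langle g \rangle$ with $B_1 = \{f_1, f_2, D\}\subset \Bbbk[x_1,x_2,x_3]$, $B_2 = \emptyset \subset \Bbbk[x_4]$, and connecting binomial $g = x_4^{c_4} - x_1^{u_{41}}x_2^{u_{42}}x_3^{u_{43}}$ whose two monomials lie respectively in $\Bbbk[x_4]$ and $\Bbbk[x_1,x_2,x_3]$. So the plan is to leverage Proposition \ref{prop:gN*} to translate the conditions on $J$ into the numerical identity $\gcd(a_1,a_2,a_3) = c_4$, which is exactly what converts the gluing of $\{a_1,a_2,a_3\}$ with $\{a_4\}$ into the stated form $c_4\mathbb{N}\mathcal{A}' + a_4\mathbb{N}$.

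For the forward implication, assume $J = I_\mathcal{A}$. The decomposition above immediately displays $\mathbb{N}\mathcal{A}$ as a gluing of $\mathcal{A}_1 := \{a_1,a_2,a_3\}$ and $\mathcal{A}_2 := \{a_4\}$ in the sense defined in this section. I would then apply the identity $\mathbb{Z}\mathcal{A}_1 \cap \mathbb{Z}\mathcal{A}_2 = d\mathbb{Z}$ with $d = \mathcal{A}\mathbf{v}$ recalled right after the definition of gluing. Taking $\mathbf{v} = c_4\mathbf{e}_4$, one gets $d = c_4 a_4$. On the other hand, $\mathbb{Z}\mathcal{A}_1 \cap \mathbb{Z}\mathcal{A}_2 = \mathrm{lcm}(\gcd(a_1,a_2,a_3),a_4)\mathbb{Z}$, so $\mathrm{lcm}(\gcd(a_1,a_2,a_3), a_4) = c_4 a_4$. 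Since $\mathbb{N}\mathcal{A}$ is a numerical semigroup, $\gcd(\mathcal{A}) = 1$, hence $\gcd(\gcd(a_1,a_2,a_3),a_4) = 1$, and therefore $\gcd(a_1,a_2,a_3) = c_4$. Setting $\mathcal{A}' = \{a_i/c_4 : i \in \{1,2,3\}\}$ yields $\gcd(\mathcal{A}') = 1$ and $\mathbb{N}\mathcal{A}_1 = c_4\mathbb{N}\mathcal{A}'$, giving the desired gluing shape.

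For the converse, assume $\mathbb{N}\mathcal{A}$ is a gluing of $c_4\mathbb{N}\mathcal{A}'$ and $a_4\mathbb{N}$; in particular $a_i = c_4 a'_i$ for $i \in \{1,2,3\}$ with $\gcd(a'_1,a'_2,a'_3) = 1$, and $\gcd(c_4,a_4) = 1$. I would verify the two hypotheses of Proposition \ref{prop:gN*}. For (a), scaling does not affect the toric ideal, so $I_{\{a_1,a_2,a_3\}} = I_{c_4\mathcal{A}'} = I_{\mathcal{A}'}$ is a prime binomial ideal in $\Bbbk[x_1,x_2,x_3]$; the three binomials $f_1, f_2, D$ have exponents satisfying the relations among $\mathcal{A}'$ and, being critical binomials in three variables, they generate $I_{\mathcal{A}'}$ by Herzog's theorem recalled in Section \ref{sec:prelim}. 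For (b), the coprimality of the $(3,j)$-minors of $B$ is equivalent to $\gcd(\mathcal{A}) = 1$, which holds since $\mathbb{N}\mathcal{A}$ is a numerical semigroup. Proposition \ref{prop:gN*} then yields $J = I_\mathcal{A}$.

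The main obstacle is in the converse: showing that $\langle f_1, f_2, D \rangle$ equals (and not merely is contained in) the toric ideal $I_{\mathcal{A}'}$. This is handled by combining the three-variable case of Herzog's criticality result with the fact that the exponent data in (gN*) already gives a binomial critical with respect to each of $x_1, x_2, x_3$, so the three form a minimal system of generators of the toric ideal $I_{\mathcal{A}'}$.
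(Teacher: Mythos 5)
Your proposal is correct and shares the paper's overall strategy (reduce to Proposition~\ref{prop:gN*}, and use Herzog's three--variable classification for the converse), but the forward direction takes a genuinely different route. The paper derives the decomposition $I_\mathcal{A}=I_{\mathcal{A}'}+\langle g\rangle$ from condition (a) together with Sturmfels' elimination result for toric ideals, and extracts $a_i=c_4a'_i$ from the coprimality of the $(3,j)$--minors in condition (b); you instead read the gluing presentation directly off the shape of the generating set $\{f_1,f_2,D\}\cup\{g\}$ and pin down the key identity $c_4=\gcd(a_1,a_2,a_3)$ via $\mathbb{Z}\mathcal{A}_1\cap\mathbb{Z}\mathcal{A}_2=d\mathbb{Z}$ with $d=c_4a_4$, combined with $\gcd\big(\gcd(a_1,a_2,a_3),a_4\big)=1$. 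That computation is correct and arguably more self-contained, since it relies only on the gluing lemma already recalled at the start of this section rather than on elimination theory. One caveat: in the converse you claim that condition (b) ``is equivalent to $\gcd(\mathcal{A})=1$.'' This is not accurate --- the gcd of the maximal minors of the relevant $3\times 4$ submatrix equals the product of its invariant factors, so (b) asserts that the lattice spanned by the exponent vectors of $f_1,f_2,g$ is saturated, which is strictly stronger than primitivity of the kernel vector $(a_1,\ldots,a_4)$. The slip is harmless here, because once Herzog's theorem gives $\langle f_1,f_2,D\rangle=I_{\mathcal{A}'}$ and $g$ is identified as the gluing binomial, the equality $J=I_\mathcal{A}$ follows directly from the definition of gluing without re-entering Proposition~\ref{prop:gN*}; but as written, the verification of (b) would not stand on its own. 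Your final remark that the converse tacitly assumes $f_1,f_2,D$ are the critical binomials of $\mathcal{A}'$ is fair --- the paper's one-sentence proof of the converse makes the same tacit assumption.
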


\begin{proof}
By \cite[Proposition 4.13(a)]{Sturmfels95}, $I_\mathcal{A} \cap \Bbbk[x_1, x_2, x_3]$ is the toric ideal of $\mathcal{A}'$. So, condition (a) in Proposition \ref{prop:gN*} implies that $I_\mathcal{A} = I_{\mathcal{A}'} + \langle g \rangle$. Furthermore, by Proposition \ref{prop:gN*}(b), $a_i = c_4 a'_i$, $i \in\{ 1, 2, 3\}$, and $c_4 a_4 = u_{41} a_1 + u_{42} a_2 + u_{43} a_3 = c_4 (u_{41} a'_1 + u_{42} a'_2 + u_{43} a'_3),$ so we conclude that $\mathcal{A}$ is a gluing of $c_4 \mathcal{A}' \mathbb N$ and $a_4 \mathbb{N}$. The converse is a direct consequence of Proposition \ref{prop:gN*} and the Herzog's classification theorem of toric ideals associated to numerical semigroups minimally generated by three positive integers (see \cite[Section 3]{Herzog70}, or \cite[Theorem 2.2]{OjPis}).
\end{proof}

Let us see now that the gluing of a critical monoid with a free semigroup of rank one is again a critical monoid.

\begin{proposition}\label{prop: gluing-critical}
Let $G$ be a commutative group, $\mathcal A = \{\mathbf{a}_1, \ldots, \mathbf{a}_n\} \subset G $ and $\mathbf{a}_{n+1} \in G$. 
If $\mathbb N \mathcal{A}$ is a critical monoid and $S=\mathbb N\mathcal A+\mathbb N\mathbf a_{n+1}$ is a gluing of $\mathbb N \mathcal{A}$ and $\mathbb N \mathbf{a}_{n+1}$, then $S$ is a critical monoid.
\end{proposition}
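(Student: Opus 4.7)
The plan is to exhibit $n+1$ critical binomials whose ideal is $I_S$. Because $\mathbb{N}\mathcal{A}$ is critical, there are critical binomials $g_1,\ldots,g_n$ (with $g_i$ critical with respect to $x_i$) already generating $I_{\mathcal{A}}$, and I may view them in $\Bbbk[x_1,\ldots,x_{n+1}]$. The gluing hypothesis supplies an extra generator $B = \mathbf{x}^{\mathbf{u}}-x_{n+1}^{c_{n+1}}$ with $\mathbf{x}^{\mathbf{u}}\in \Bbbk[x_1,\ldots,x_n]$, so that $I_S = \langle g_1,\ldots,g_n, B\rangle$; and, as recalled before the proposition, it also forces $\mathbb{Z}\mathcal{A}\cap \mathbb{Z}\mathbf{a}_{n+1} = \mathbb{Z}(c_{n+1}\mathbf{a}_{n+1})$. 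The task then reduces to checking that each of these $n+1$ generators is critical for the matching variable.

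The criticality of $B$ with respect to $x_{n+1}$ will follow directly from $\mathbb{Z}\mathcal{A}\cap \mathbb{Z}\mathbf{a}_{n+1} = \mathbb{Z}(c_{n+1}\mathbf{a}_{n+1})$: this makes $c_{n+1}$ the least positive integer with $c_{n+1}\mathbf{a}_{n+1}\in\mathbb{Z}\mathcal{A}$, and the equality $c_{n+1}\mathbf{a}_{n+1} = \mathbf{A}\mathbf{u}\in\sum_{j\le n}\mathbb{N}\mathbf{a}_j$ strengthens this to membership in the positive cone.

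For each $i\le n$, I will argue that the critical exponent $c_i^\ast$ of $x_i$ in $I_S$ equals the critical exponent $c_i$ of $x_i$ in $I_{\mathcal{A}}$, so $g_i$ remains critical. Clearly $c_i^\ast\le c_i$; suppose toward a contradiction that $c'\mathbf{a}_i = \sum_{j\neq i,\,j\le n+1}w_j\mathbf{a}_j$ with $w_j\ge 0$ and $c'<c_i$. The case $w_{n+1}=0$ is ruled out by the criticality of $c_i$ in $I_{\mathcal{A}}$. If $w_{n+1}>0$, the identity $\mathbb{Z}\mathcal{A}\cap \mathbb{Z}\mathbf{a}_{n+1}=\mathbb{Z}(c_{n+1}\mathbf{a}_{n+1})$ forces $c_{n+1}\mid w_{n+1}$, say $w_{n+1}=m\,c_{n+1}$; substituting $c_{n+1}\mathbf{a}_{n+1}=\mathbf{A}\mathbf{u}$ rewrites the relation as $(c'-m u_i)\mathbf{a}_i = \sum_{j\neq i,\,j\le n}(w_j+m u_j)\mathbf{a}_j$. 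A case analysis on the sign of $c'-m u_i$, together with the unit-freeness of $\mathbb{N}\mathcal{A}$ (which implies that no nontrivial non-negative combination of the $\mathbf{a}_j$'s vanishes), should force a contradiction with the criticality of $c_i$ in $I_{\mathcal{A}}$ in every case.

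Putting these steps together, $\{g_1,\ldots,g_n,B\}$ will be a set of $n+1$ critical binomials, one per variable, that generates $I_S$; hence $I_S$ is a critical binomial ideal and $S$ is a critical monoid. I expect the main obstacle to be the borderline case $c'-m u_i=0$ in the previous paragraph, where the unit-free hypothesis must be exploited to conclude that every coefficient $w_j+m u_j$ for $j\neq i$, $j\le n$, vanishes; the resulting configuration must then be traced back to a contradiction with $c'<c_i$, closing the argument.
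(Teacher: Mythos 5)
Your proposal is correct and follows essentially the same route as the paper: use the gluing generators, invoke $\mathbb{Z}\mathcal{A}\cap\mathbb{Z}\mathbf{a}_{n+1}=\mathbb{Z}(c_{n+1}\mathbf{a}_{n+1})$ to write $w_{n+1}=m\,c_{n+1}$, substitute $c_{n+1}\mathbf{a}_{n+1}=\sum_j u_j\mathbf{a}_j$, and let unit-freeness force $c'-m u_i>0$ so that the minimality of $c_i$ in $\mathbb{N}\mathcal{A}$ applies. The borderline case $c'-m u_i\le 0$ that you flag does close as you expect, since unit-freeness (plus minimal generation) rules out a vanishing nontrivial non-negative combination of the $\mathbf{a}_j$, which is exactly the step the paper's proof also relies on.
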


\begin{proof}
We are assuming that $I_\mathcal A$ is critical, and so it is  generated by a set of the form  $\{x_1^{c_1}-\mathbf{x}^{\mathbf{u}_1},\ldots, x_n^{c_n}-\mathbf{x}^{\mathbf{u}_n}\} \subset \Bbbk[x_1, \ldots, x_n]$, for some $\mathbf{u}_1, \ldots, \mathbf{u}_n \in\mathbb N^n$, and where $c_i$ is the least multiple of $\mathbf a_i$ that belongs to $\mathbb N(\mathcal A\setminus\{\mathbf a_i\})$ (and consequently the $i^{th}$ coordinate of $\mathbf{u}_i$ is zero). On the other hand, by definition of gluing, the semigroup ideal of $S$, $I_{\mathcal A\cup\{\mathbf a_{n+1}\}}$,  has a system of generators of the form $\mathcal{S} = \{x_1^{c_1}-\mathbf{x}^{\mathbf{u}_1},\ldots, x_n^{c_n}-\mathbf{x}^{\mathbf{u}_n}\} \cup \{x_{n+1}^{c_{n+1}} - \mathbf{x}^{\mathbf{u}_{n+1}}\}$, with $\mathbf{x}^{\mathbf{u}_{n+1}} \in \Bbbk[x_1, \ldots, x_n]$ (recall that $I_{\mathbb N \mathbf{a}_{n+1}} = \langle 0 \rangle$). Also, we know that  $\mathbb Z \mathcal A \cap \mathbb Z \mathbf{a}_{n+1} = \mathbb Z (c_{n+1} \mathbf a_{n+1})$ (our definition of gluing implies that $S$ is minimally generated by $\mathcal A\cup\{\mathbf a_{n+1}\}$, and thus $c_{n+1}>1$). 

So it only remains to prove that the $c_i$ are still the minimal positive integers such that $c_i\mathbf a_i\in \mathbb N((\mathcal A\cup\{\mathbf a_{n+1}\})\setminus\{\mathbf a_i\})$, and that $c_{n+1}$ is the minimum positive integer with $c_{n+1}\mathbf a_{n+1}\in \mathbb N\mathcal A$.

Let $i\in \{1,\ldots,n\}$ and $k$ a positive integer fulfilling that $k\mathbf a_i = \sum_{j=1}^n w_j \mathbf a_j +w_{n+1}\mathbf a_{n+1}$ for some $\mathbf w = (w_1, \ldots, w_n, w_{n+1}) \in\mathbb N^{n+1}$ with $w_i=0$. Then $w_{n+1}\mathbf a_{n+1}\in \mathbb Z\mathcal A\cap \mathbb Z\mathbf a_{n+1}=\mathbb Z(c_{n+1}\mathbf a_{n+1})$. It follows that $c_{n+1}\mid w_{n+1}$, thus we may write $w_{n+1}= \ell c_{n+1}$. As $c_{n+1}\mathbf a_{n+1}= \sum_{i=1}^n u_{n+1\, i} \mathbf a_i$, we have that $(k-\ell u_{n+1\, i})\mathbf a_i = (w_1+\ell u_{n+1\, 1})\mathbf a_1+\cdots + (w_{i-1}+\ell u_{n+1\, i-1})\mathbf a_{i-1}+(w_{i+1}+\ell u_{n+1\, i+1})\mathbf a_{i+1}+\cdots + (w_{n}+\ell u_{n+1\, n})\mathbf a_{n}$. Since $\mathbb N\mathcal A$ is free of units, we deduce that $k-\ell u_{n+1\, i}>0$. The minimality of $c_i$ implies that $c_i\le k-\ell u_{n+1\, i}\le k$.

Finally, if $k$ is a positive integer such that $k\mathbf a_{n+1}\in \mathbb N\mathcal A$, then $k\mathbf a_{n+1}\in \mathbb Z\mathcal A\cap \mathbb Z\mathbf a_{n+1}=\mathbb Z(c_{n+1}\mathbf a_{n+1})$. This forces $c_{n+1}\mid k$, whence $c_{n+1}\le k$.
\end{proof}

Observe also that we can use Theorem \ref{th:gen-N} and Proposition \ref{prop: gluing-critical} to produce a new critical monoid. In fact, we can repeat this process and gluing again with another copy of $\mathbb N$ and so on. However, it is convenient to point that, for $n > 4$, there are numerical semigroups whose critical associated ideal is not of this form nor Northcott type.

\begin{example}
A quick search using \texttt{numericalsgps} (\cite{numericalsgps}, a \texttt{GAP} package, \cite{GAP}), shows that $S=\left< 11, 13, 14, 15, 19 \right>$ is not a gluing and not Northcott type. 
	\begin{verbatim}
	gap> MinimalPresentationOfNumericalSemigroup(s);
     [ [ [ 0, 0, 0, 2, 0 ], [ 1, 0, 0, 0, 1 ] ],
       [ [ 0, 0, 2, 0, 0 ], [ 0, 1, 0, 1, 0 ] ],
       [ [ 0, 2, 0, 0, 0 ], [ 1, 0, 0, 1, 0 ] ],
       [ [ 1, 1, 1, 0, 0 ], [ 0, 0, 0, 0, 2 ] ],
       [ [ 3, 0, 0, 0, 0 ], [ 0, 0, 1, 0, 1 ] ] ]
	\end{verbatim}
\end{example}

Unfortunately in the process of applying Proposition \ref{prop: gluing-critical}, the uniquely presented property of the family (N) might be lost. This is due to the fact that $c_{n+1} \mathbf{a}_{n+1}$ can be chosen to have more than one factorization in terms of the elements of $\mathcal A$. We illustrate this with an example.

\begin{example}
Let $S=\langle 3,5,7\rangle$ and take $a=2$ and $b=21$. 
\begin{verbatim}
gap> s:=NumericalSemigroup(3,5,7);
<Numerical semigroup with 3 generators>
gap> FactorizationsElementWRTNumericalSemigroup(21,s);
[ [ 7, 0, 0 ], [ 2, 3, 0 ], [ 3, 1, 1 ], [ 0, 0, 3 ] ]
gap> t:=NumericalSemigroup(6,10,14,21);
<Numerical semigroup with 4 generators>
gap> AsGluingOfNumericalSemigroups(t);
[ [ [ 6, 10, 14 ], [ 21 ] ] ]
gap> IsUniquelyPresentedNumericalSemigroup(t);
false
\end{verbatim}
\end{example}

In \cite[Theorem 13]{GSOj} we characterized gluings that are uniquely presented. If we apply this characterization to $S$ in Proposition \ref{prop: gluing-critical} with the notations in its proof, then $S$ is uniquely presented if and only if $\mathbb N\mathcal A$ is uniquely presented and  $\pm (c_{n+1}\mathbf a_{n+1}-c_i\mathbf a_i)\not\in S$ for all $i\in \{1,\ldots,n\}$. It is not hard to show that this is equivalent to $\mathbb N\mathcal A$ being uniquely presented and $c_{n+1}\mathbf a_{n+1}$ having a unique expression in terms of $\mathbf a_1,\ldots, \mathbf a_n$.

\section{On Northcott type ideals and set-theoretic complete intersections}

A classic problem in Commutative Algebra is to ask what the least number of polynomials are needed to determine the radical of a given ideal. This number is called the arithmetic rank of the ideal. Clearly, for a binomial ideal $I$, one has the following inequalities $$\mathrm{ht}(I) \leq \mathrm{ara}(I) \leq \mu(I),$$ where $\mathrm{ht}$, $\mathrm{ara}$ and $\mu$ denote the height, the arithmetic rank and the cardinal of a minimal system of generators of $I$ with smallest cardinality, respectively. 

\begin{definition}\label{def intersec}
An ideal $I$ of $\Bbbk[x_1, \ldots, x_n]$ is said to be:
\begin{itemize}
\item a \textbf{complete intersection} if $\mu(I) = \mathrm{ht}(I)$,
\item an \textbf{almost complete intersetion} if if $\mu(I) = \mathrm{ht}(I) + 1$,
\item a \textbf{set-theoretic complete intersection} if $\mathrm{ara}(I) = \mathrm{ht}(I)$, that is to say, if there exists $h$ polynomials $g_1, \ldots, g_h$ such that $\sqrt{I} = \sqrt{\langle g_1, \ldots, g_h \rangle}$, where $h = \mathrm{ht}(I)$.
\end{itemize}
\end{definition}

If $I \subset \Bbbk[x_1, \ldots, x_n]$ is a critical ideal of Northcott type, by definition, there exist a matrix $\Phi$ and vector $\mathbf{m}$ such that $I$ is minimal generated by $\{f_1, \ldots, f_{n-1}, D\}$ where $(f_1, \ldots, f_{n-1})^\top = \Phi\, \mathbf{m}^\top$ and $D = \det(\Phi)$. The ideal generated by $\{f_1, \ldots, f_{n-1}\}$ is a complete intersection and $I$ is an almost complete intersection.

\begin{proposition}\label{Prop rad1}
With the above notation, $$\langle f_1, \ldots, f_{n-1} \rangle = I \cap \langle \mathbf{m} \rangle,$$ where $\langle \mathbf{m} \rangle$ is the monomial ideal generated by the entries of $\mathbf{m}$.
\end{proposition}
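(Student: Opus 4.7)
My plan is to prove the two inclusions separately. The inclusion $\langle f_1, \ldots, f_{n-1} \rangle \subseteq I \cap \langle \mathbf{m} \rangle$ is immediate: by construction $(f_1, \ldots, f_{n-1})^\top = \Phi\, \mathbf{m}^\top$ exhibits each $f_i$ as a $\Bbbk[\mathbf{x}]$-linear combination of the entries of $\mathbf{m}$, so $f_i \in \langle \mathbf{m} \rangle$, and clearly $f_i \in I$.

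For the reverse inclusion, take $g \in I \cap \langle \mathbf{m} \rangle$ and write $g = h_1 f_1 + \cdots + h_{n-1} f_{n-1} + h_n D$ with $h_i \in \Bbbk[\mathbf{x}]$. Since each $f_i \in \langle \mathbf{m} \rangle$ and $g \in \langle \mathbf m \rangle$, subtracting gives $h_n D \in \langle \mathbf m \rangle$. The key ingredient from the Northcott-type structure is that Cramer's rule, applied by multiplying $\Phi \mathbf{m}^\top = (f_1, \ldots, f_{n-1})^\top$ on the left by $\operatorname{adj}(\Phi)$, produces $D \cdot \mathbf{m}^\top = \operatorname{adj}(\Phi)\, (f_1, \ldots, f_{n-1})^\top$, so $D\, m_j \in \langle f_1, \ldots, f_{n-1} \rangle$ for every $j$, and therefore $D \cdot \langle \mathbf m \rangle \subseteq \langle f_1, \ldots, f_{n-1}\rangle$. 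The goal then reduces to showing that $h_n D \in \langle \mathbf m \rangle$ forces $h_n \in \langle \mathbf m \rangle$, equivalently that $D$ is a non-zerodivisor modulo $\langle \mathbf m \rangle$.

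This is the one step I expect to require a small argument. The ideal $\langle \mathbf m \rangle = \langle x_1^{u_{21}}, x_2^{u_{32}}, \ldots, x_{n-2}^{u_{n-1\, n-2}}, x_{n-1}^{u_{1\, n-1}} \rangle$ is generated by powers of pairwise distinct variables in $\Bbbk[x_1, \ldots, x_n]$; accordingly, $\Bbbk[\mathbf x]/\langle \mathbf m\rangle$ is a polynomial ring in $x_n$ over an Artinian local ring, so $\langle \mathbf m \rangle$ is $\mathfrak{p}$-primary for $\mathfrak{p} := \langle x_1, \ldots, x_{n-1}\rangle$ and the non-zerodivisors modulo $\langle \mathbf m\rangle$ are exactly the elements outside $\mathfrak p$. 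From the determinant computation in the proof of Theorem \ref{th:gen-N}, $D$ is, up to sign, equal to $\prod_{i=1}^{n-1} x_i^{u_{ni}} - x_n^{\sum_{i=1}^{n-1} u_{in}}$; since every $u_{in} > 0$, the monomial $x_n^{\sum u_{in}}$ lies outside $\mathfrak p$, so $D \notin \mathfrak p$, which yields the desired non-zerodivisor property.

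To conclude, write $h_n = \sum_j p_j m_j$ with $p_j \in \Bbbk[\mathbf x]$; then $h_n D = \sum_j p_j (D m_j) \in \langle f_1, \ldots, f_{n-1}\rangle$ by the Cramer step, and therefore $g = \sum_{i=1}^{n-1} h_i f_i + h_n D \in \langle f_1, \ldots, f_{n-1}\rangle$, proving the reverse inclusion. The only non-routine step is the non-zerodivisor verification for $D$, and this boils down to the transparent observation that the explicit form of $D$ contains a pure power of $x_n$ with nonzero coefficient, a fact guaranteed by the strict positivity of the exponents $u_{ij}$ in Definition \ref{Def CINT}.
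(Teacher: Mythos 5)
Your proof is correct, and it follows the same overall strategy as the paper's (reduce to showing that the $D$-component of a general element of $I\cap\langle\mathbf m\rangle$ already lies in $\langle f_1,\ldots,f_{n-1}\rangle$), but it handles the two nontrivial steps more conceptually. Where the paper shows $D\,m_j\in\langle f_1,\ldots,f_{n-1}\rangle$ by an explicit telescoping rewriting of $m_j D$ (done for $j=1$ and implicitly symmetrically for the others), you invoke the adjugate identity $\operatorname{adj}(\Phi)\,\Phi=D\cdot\mathrm{Id}$, which gives $D\,\mathbf m^\top=\operatorname{adj}(\Phi)(f_1,\ldots,f_{n-1})^\top$ in one line; this is exactly the structural fact that makes Northcott-type ideals work, so it is the cleaner route. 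And where the paper passes from ``$gD\in\langle\mathbf m\rangle$'' to ``we may take $g$ to be a generator of $\langle\mathbf m\rangle$'' without fully spelling out why $g\in\langle\mathbf m\rangle$ (one can extract an elementary argument by comparing $x_n$-degrees, since $\langle\mathbf m\rangle$ is a monomial ideal in $x_1,\ldots,x_{n-1}$ and $D$ has the term $-x_n^{\sum u_{in}}$), you isolate and justify this step cleanly: $\langle\mathbf m\rangle$ is a complete intersection of pure powers of $x_1,\ldots,x_{n-1}$, hence $\mathfrak p$-primary for $\mathfrak p=\langle x_1,\ldots,x_{n-1}\rangle$, and $D\notin\mathfrak p$ because the strict positivity of the $u_{ij}$ forces the presence of the pure power $x_n^{\sum u_{in}}$; so $D$ is a non-zerodivisor modulo $\langle\mathbf m\rangle$. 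Both approaches prove the proposition; yours buys a more transparent and self-contained justification of the key cancellation step at essentially no extra cost, while the paper's direct computation is more hands-on and makes the membership $D m_j\in\langle f_1,\ldots,f_{n-1}\rangle$ concrete. One tiny point of hygiene: your formula for $D$ uses the exponents $u_{ni}$ on the monomial part, $D=\pm\bigl(\prod_{i=1}^{n-1}x_i^{u_{ni}}-x_n^{\sum_i u_{in}}\bigr)$, which matches the exponent matrix $M$ in the proof of Theorem~\ref{th:gen-N}; the form written in the paper's proof of this proposition has a typo in those exponents, and you are right to use the correct one.
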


\begin{proof}
By construction, it suffices to prove that $I \cap \langle \mathbf{m} \rangle \subseteq \langle f_1, \ldots, f_{n-1} \rangle$.
Let $f = \sum_{i=1}^{n-1} g_i f_i + g D \in I \cap \langle \mathbf{m} \rangle$. Since the first summand lies in $\langle f_1, \ldots, f_{n-1} \rangle \subseteq I \cap \langle \mathbf{m} \rangle$, we need to prove that $$\hbox{if } g D \in \langle \mathbf{m} \rangle \hbox{, then } g D \in \langle f_1, \ldots, f_{n-1} \rangle.$$ Moreover, since $\langle \mathbf{m} \rangle$ is a monomial ideal, $D = -x_n^{\sum_{i=1}^{n-1} u_{in}} + x_1^{u_{1n}} \cdots x_{n-1}^{u_{n-1\, n}}$ and no generator of $\langle \mathbf{m} \rangle$ depends on $x_n$ we may assume, without loss of generality, that $g$ is a minimal generator of $\langle \mathbf{m} \rangle$. Thus, let $g = x_j^{u_{\sigma(j) j}}$, where $\sigma = (1\, 2\, \cdots n-1) \in S_n$. For the sake of simplicity suppose $j = 1$, then 
\begin{align*}
g D = & x_1^{u_{21}} \big( -x_n^{\sum_{i=1}^{n-1} u_{in}} + x_1^{u_{1n}} \cdots x_{n-1}^{u_{n-1\, n}} \big) = x_2^{u_{2n}} \cdots x_{n-1}^{u_{n-1\, n}} f_1 + \\ & + x_2^{u_{2n}} \cdots x_{n-1}^{u_{n-1\, n} + u_{1\, n-1}} x_n^{u_{1n}} - x_1^{u_{21}} x_n^{\sum_{i=1}^{n-1} u_{in}} =  x_2^{u_{2n}} \cdots x_{n-1}^{u_{n-1\, n}} f_1 + \\ & +  x_2^{u_{2n}} \cdots x_{n-2}^{u_{n-2\, n}} x_n^{u_{1n}} f_{n-1} + x_2^{u_{2n}} \cdots x_{n-2}^{u_{n-2\, n} + u_{n-1\, n-2}} x_n^{u_{1n}+u_{n-1\, n}} - \\ & - x_1^{u_{21}} x_n^{\sum_{i=1}^{n-1} u_{in}} = \ldots = x_2^{u_{2n}} \cdots x_{n-1}^{u_{n-1\, n}} f_1 + x_2^{u_{2n}} \cdots x_{n-2}^{u_{n-2\, n}} x_n^{u_{1n}} f_{n-1} + \\ & +  \ldots + x_2^{u_{2n}} \big( x_n^{x_{1n} + \sum_{i=3}^{n-1} u_{in}} \big) f_3 + \big( x_n^{\sum_{i=1}^{n-1} u_{in}} \big) f_2
\end{align*}
and we are done. 
\end{proof}

Since $\langle \mathbf{m} \rangle$ is a complete intersection, Proposition \ref{Prop rad1} has the following interpretation in the context of liaison theory: the affine variety of a critical ideal of Norhtcott type is  directly CI-linked to the variety of $\langle \mathbf{m} \rangle$ (see \cite[Definition 5.1.2]{migliore}).

\begin{corollary}\label{Cor rad1}
With the above notation, $$\sqrt{\langle f_1, \ldots, f_{n-1} \rangle} = \sqrt{I} \cap \langle x_1, \ldots, x_{n-1} \rangle.$$
\end{corollary}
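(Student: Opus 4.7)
The proof is essentially a one-line consequence of Proposition \ref{Prop rad1}, so the plan is short. First I would take radicals on both sides of the identity $\langle f_1, \ldots, f_{n-1}\rangle = I \cap \langle \mathbf m\rangle$ proved in Proposition \ref{Prop rad1}. Using the standard fact that the radical commutes with finite intersections of ideals, this rewrites as
\[
\sqrt{\langle f_1, \ldots, f_{n-1}\rangle} \;=\; \sqrt{I \cap \langle \mathbf m\rangle} \;=\; \sqrt{I}\,\cap\,\sqrt{\langle \mathbf m\rangle}.
\]

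The second step is to identify $\sqrt{\langle \mathbf m\rangle}$. Recall from Definition \ref{Def CINT} that $\mathbf m = (x_1^{u_{21}}, \ldots, x_{n-2}^{u_{n-1\, n-2}}, x_{n-1}^{u_{1\, n-1}})$, whose entries are pure powers of $x_1, \ldots, x_{n-1}$ with all exponents strictly positive (by the standing hypothesis $u_{ij}>0$). The radical of a monomial ideal generated by pure powers of variables is the ideal generated by those variables, so
\[
\sqrt{\langle \mathbf m\rangle} \;=\; \langle x_1, \ldots, x_{n-1}\rangle.
\]
Plugging this into the previous display yields the desired equality $\sqrt{\langle f_1,\ldots,f_{n-1}\rangle} = \sqrt{I}\cap \langle x_1,\ldots,x_{n-1}\rangle$.

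Since both of the two ingredients (Proposition \ref{Prop rad1} and the positivity of the $u_{ij}$) are already available, there is no real obstacle; the only point to check carefully is that every variable $x_1, \ldots, x_{n-1}$ actually occurs in some entry of $\mathbf m$, which follows from the explicit form of $\mathbf m$ given in Definition \ref{Def CINT}. No separate argument for the variable $x_n$ is needed, since it never appears in $\langle \mathbf m\rangle$.
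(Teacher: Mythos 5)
Your proof is correct and follows the same route as the paper: apply Proposition~\ref{Prop rad1}, use that the radical distributes over a finite intersection, and observe that $\sqrt{\langle\mathbf m\rangle}=\langle x_1,\ldots,x_{n-1}\rangle$ because the entries of $\mathbf m$ are positive pure powers of $x_1,\ldots,x_{n-1}$.
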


\begin{proof}
By Proposition \ref{Prop rad1}, $\langle f_1, \ldots, f_{n-1} \rangle = I \cap \langle \mathbf{m} \rangle$. Now, since $\sqrt{I \cap \langle \mathbf{m} \rangle} = \sqrt{I} \cap \sqrt{\langle \mathbf{m} \rangle}$ and 
$\sqrt{\langle \mathbf{m} \rangle} = \langle x_1, \ldots, x_{n-1} \rangle,$ our claim follows.
\end{proof}

\begin{lemma}\label{Lemma conj}
With the above notation, the reduction of $f_{n-1}^{c_1 \cdots c_{n-2}}$ modulo the ideal generated by $\{ f_1, \ldots, f_{n-2} \}$ is $$x_{n-1}^{\prod_{i=1}^{n-1} u_{\sigma(j)\, j}} (x_n^r + h),\ \text{for some}\ h \in \langle x_1, \ldots, x_{n-1} \rangle,$$ where $\sigma = (1\, 2\, \ldots n-1) \in S_n$ and $c_i = u_{ni}+u_{i+1\, i}$, $i\in\{ 1, \ldots, n-2\}$.
\end{lemma}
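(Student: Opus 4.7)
The plan is a direct cascading reduction of each monomial in the binomial expansion of $f_{n-1}^M$. Writing $f_{n-1}=x_{n-1}^{c_{n-1}}-\mu$ with $\mu=x_{n-2}^{u_{n-1,n-2}}x_n^{u_{n-1,n}}$ and $M=c_1\cdots c_{n-2}$, I first expand
\[
f_{n-1}^{M}=\sum_{k=0}^{M}\binom{M}{k}(-1)^{M-k}\,x_{n-1}^{kc_{n-1}}\,\mu^{M-k},
\]
and then reduce each summand modulo $\langle f_1,\ldots,f_{n-2}\rangle$ through the substitutions $x_i^{c_i}\rightsquigarrow x_{\tau(i)}^{u_{i,\tau(i)}}x_n^{u_{in}}$, applied in the order $i=n-2,n-3,\ldots,1$ (with $\tau=\sigma^{-1}$, so $\tau(1)=n-1$ and $\tau(i)=i-1$ for $i\geq 2$). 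Because each right-hand side is a single monomial, every summand reduces to a single monomial and no new polynomial terms appear mid-reduction.

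The crucial observation is that the $k=0$ summand reduces \emph{exactly}: $Mu_{n-1,n-2}$ is divisible by $c_{n-2}$ (since $c_{n-2}\mid M$), the resulting $x_{n-3}$-exponent by $c_{n-3}$, and so on. The final application of $f_1$ rewrites $x_1^{c_1 U}$, with $U=u_{n-1,n-2}u_{n-2,n-3}\cdots u_{21}$, as $x_{n-1}^{Uu_{1,n-1}}x_n^{Uu_{1n}}=x_{n-1}^{A}x_n^{r}$, where $A=\prod_{j=1}^{n-1}u_{\sigma(j),j}$ and $r$ is a specific positive integer. Thus the $k=0$ summand yields the unique pure $(x_{n-1},x_n)$-monomial $(-1)^{M}x_{n-1}^{A}x_n^{r}$ in the whole reduction.

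For $k\geq 1$ the cascade is no longer exact in general: at some step $i\leq n-2$ a nonzero remainder $s_i<c_i$ appears and survives, injecting an $x_i$-factor with $i\leq n-2$; the extreme case $k=M$ degenerates to the pure power $x_{n-1}^{Mc_{n-1}}$. A bookkeeping of the propagated quotients shows that the $x_{n-1}$-exponent of the reduced $k$-th summand equals $kc_{n-1}+q_1^{(k)}u_{1,n-1}$, and using $u_{\sigma(i),i}<c_i$ (which gives $A<Mc_{n-1}$) together with the cascade bound on $q_1^{(k)}$ one checks that this exponent is $\geq A$ for every $k$. Hence every monomial in the reduced polynomial is divisible by $x_{n-1}^{A}$; factoring it out leaves a polynomial whose only pure $x_n$-power monomial is $\pm x_n^{r}$ (coming from $k=0$), while every other surviving monomial lies in $\langle x_1,\ldots,x_{n-1}\rangle$.

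I expect the main obstacle to be the arithmetic confirmation that $kc_{n-1}+q_1^{(k)}u_{1,n-1}\geq A$ for every $0<k<M$: the propagated quotient $q_1^{(k)}$ is produced by a chain of floor-divisions, and converting the heuristic estimate $q_1^{(k)}u_{1,n-1}\approx(1-k/M)A$ into an actual inequality requires carefully controlling the accumulated ceiling corrections along the cascade of the $c_i$'s. Once this inequality is established, the claimed form $x_{n-1}^{A}(x_n^{r}+h)$ with $h\in\langle x_1,\ldots,x_{n-1}\rangle$ follows immediately (absorbing the overall sign $(-1)^M$ into the convention for $h$).
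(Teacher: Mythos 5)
Your approach---expand $f_{n-1}^{M}$ by the binomial theorem and cascade each summand through $f_{n-2},\ldots,f_1$, treating the $k=0$ (pure $\mu^{M}$) summand separately as the one that cascades exactly---is precisely the argument given in the paper, which likewise asserts as ``a careful computation'' that every other reduced monomial carries a strictly larger power of $x_{n-1}$. Your bookkeeping identity for that exponent, $kc_{n-1}+q_1^{(k)}u_{1\,n-1}$, is correct, and your note that the sign $(-1)^{M}$ is being swept under the rug is also accurate (the paper drops it too; it is harmless in the application to Theorem \ref{Th:STCI}).

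The inequality you flag as the ``main obstacle'' is in fact not delicate and does not need any subtle control of accumulated ceiling errors. Put $U_i:=u_{\sigma(i)\,i}$ (so $U_i<c_i$ for every $i$), and for $1\le k\le M$ define
$$R_{n-2}:=\left\lceil \frac{kU_{n-2}}{c_{n-2}}\right\rceil,\qquad R_{i}:=\left\lceil \frac{R_{i+1}U_{i}}{c_{i}}\right\rceil\quad (i=n-3,\ldots,1).$$
Since $(M-k)U_{n-2}=c_1\cdots c_{n-3}U_{n-2}\cdot c_{n-2}-kU_{n-2}$, the floor identity $\lfloor a-b\rfloor=a-\lceil b\rceil$ for $a\in\mathbb Z$ gives $q_{n-2}^{(k)}=c_1\cdots c_{n-3}U_{n-2}-R_{n-2}$, and by the same token, inductively, $q_1^{(k)}=U_1\cdots U_{n-2}-R_1$. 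Because $U_i/c_i<1$, the $R_i$ are nonincreasing in the cascade and $R_1\le R_2\le\cdots\le R_{n-2}\le k$. Hence
$$kc_{n-1}+q_1^{(k)}U_{n-1}=A+kc_{n-1}-R_1U_{n-1}\ \ge\ A+k\bigl(c_{n-1}-U_{n-1}\bigr)=A+k\,u_{n\,n-1}>A$$
for every $k\ge1$, with equality $=A$ only at $k=0$. This simultaneously shows that $x_{n-1}^{A}$ divides the reduced expression $g$, that the unique monomial of $g$ with $x_{n-1}$-exponent exactly $A$ is the $k=0$ contribution $(-1)^{M}x_{n-1}^{A}x_n^{r}$ (so it cannot cancel), and that after factoring out $x_{n-1}^{A}$ every remaining monomial carries a positive power of $x_{n-1}$, hence lies in $\langle x_1,\ldots,x_{n-1}\rangle$. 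With this single estimate filled in, your proposal is a complete proof along the same lines as the paper's.
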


\begin{proof}
Set $c_{n-1} = u_{n\, n-1}+u_{1\, n-1}$ and $N = c_1 \cdots c_{n-2}$. Then $$f_{n-1}^N = \big( x_{n-1}^{c_{n-1}} - x_{n-2}^{u_{n-1\, n-2}} x_n^{u_{n-1\, n}}\big)^N = \sum_{j=0}^N (-1)^j {N\choose j}x_{n-1}^{(N-j) c_{n-1}} x_{n-2}^{j\, u_{n-1\, n-2}} x_n^{j\, u_{n-1\, n}}.$$ Let $S$ be the semigroup associated to the critical ideal of Northcott type generated by $\{f_1, \ldots, f_{n-1}, D\}$ and let $\prec$ be an $S-$graded reverse lexicographical order on $\Bbbk[x_1, \ldots, x_n]$ such that $x_1 \succ x_2 \succ \ldots \succ x_{n-2} \succ x_n \succ x_{n-1}$. Since $f_i$, $i \in\{ 1, \ldots, n-1\}$, are $S-$homogeneous, $\{f_1, \ldots, f_{n-2}\}$ forms a Gr\"obner basis with respect $\prec$. Now, by performing division by $\{f_{n-2}, \ldots, f_1\}$, whenever $j\, u_{j\, u_{n\, n-1}} \geq c_2$, we may substitute $x_{n-2}^{j\, u_{n-1\, n-2}}$ by a monomial that involves $x_n, x_{n-1}, x_{n-2}$ and $x_{n-3}$, such that the exponent of $x_2$ is less than $c_2$. If we repeat this process with $x_{n-3}, x_{n-4}, \ldots$ we obtain an equivalent expression, $g$, of $f_{n-1}^N$ modulo $\langle f_1, \ldots, f_{n-2} \rangle$ such that every monomial is multiple of $x_{n-1}$; in particular, the leading term of $g$ is $x_{n-1}^{\prod_{i=1}^{n-1} u_{\sigma(j)\, j}} x_n^r$ and its tail lies in $\langle x_1, \ldots, x_{n-1} \rangle$.
A careful computation shows that $P:=\prod_{i=1}^{n-1} u_{\sigma(j)\, j}$ is strictly smaller than the power of $x_{n-1}$ in any monomial of  $g-x_{n-1}^P$. So, we can write $g = x_{n-1}^P(x_n^r + h)$ for some $h \in  \langle x_1, \ldots, x_{n-1} \rangle$.
\end{proof}

The following result generalizes \cite[Theorem 3.1]{OCPV}.

\begin{theorem}\label{Th:STCI}
Every critical ideal of Northcott type is a set-theoretic complete intersection.
\end{theorem}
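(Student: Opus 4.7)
Since $\mathbb Z\mathcal A$ has rank $1$, one has $\dim V(I)=1$ and hence $\mathrm{ht}(I)=n-1$. To prove the theorem it therefore suffices to exhibit $n-1$ polynomials $g_1,\ldots,g_{n-1}\in\sqrt{I}$ with $V(g_1,\ldots,g_{n-1})=V(I)$. I set $g_i=f_i$ for $i=1,\ldots,n-2$, write $J=\langle g_1,\ldots,g_{n-2}\rangle$, and extract the last generator from Lemma~\ref{Lemma conj}.

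That lemma produces an equality in $\Bbbk[\mathbf x]$ of the form
\[
f_{n-1}^{N}\;=\;x_{n-1}^{P}\bigl(x_n^{r}+h\bigr)+j,\qquad N=c_1\cdots c_{n-2},\ \ P=\prod_{i=1}^{n-1}u_{\sigma(i),i},
\]
where $r>0$, $h\in\langle x_1,\ldots,x_{n-1}\rangle$ and $j\in J$. I then take
\[
g_{n-1}\;:=\;x_n^{r}+h.
\]
Two facts are essential. First, $x_{n-1}^{P}\,g_{n-1}=f_{n-1}^{N}-j\in I$, and since each irreducible component of $V(I)$ is the closure of a one-dimensional torus orbit in $\mathbb{A}^{n}$ on which $x_{n-1}$ does not vanish identically (the $\mathbb Z$-component of $\mathbf a_{n-1}$ being positive by the construction in the proof of Theorem~\ref{th:gen-N}), the variable $x_{n-1}$ lies outside every minimal prime of $I$; consequently $g_{n-1}\in\sqrt{I}$. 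Second, the $x_n$-axis $L:=V(x_1,\ldots,x_{n-1})$ is precisely the locus where every element of $\langle x_1,\ldots,x_{n-1}\rangle$ vanishes, so $g_{n-1}|_{L}=x_n^{r}$, which is zero only at the origin.

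I now verify $V(J,g_{n-1})=V(I)$ as sets. The inclusion $V(I)\subseteq V(J,g_{n-1})$ is immediate since $f_1,\ldots,f_{n-2}\in I$ and $g_{n-1}\in\sqrt{I}$. Conversely, if $p\in V(J,g_{n-1})$, the identity above forces $f_{n-1}^{N}(p)=0$, hence $f_{n-1}(p)=0$; by Corollary~\ref{Cor rad1}, $p\in V(I)\cup L$. In the case $p\in L$ the second fact above gives $x_n(p)=0$, so $p$ is the origin and thus lies in $V(I)$. This proves $V(J,g_{n-1})\subseteq V(I)$ and completes the argument.

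The main obstacle is the distinguished factorisation $f_{n-1}^{N}\equiv x_{n-1}^{P}(x_n^{r}+h)\pmod J$ with $h\in\langle x_1,\ldots,x_{n-1}\rangle$: once Lemma~\ref{Lemma conj} is granted, the rest of the proof is a short formal verification combining Corollary~\ref{Cor rad1} with the observation that $x_{n-1}$ is a non-zero-divisor on $\Bbbk[\mathbf x]/\sqrt{I}$.
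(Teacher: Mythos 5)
Your argument is correct in substance and rests on exactly the same two pillars as the paper's proof: Lemma~\ref{Lemma conj} supplies the polynomial $f=x_n^r+h$ that replaces the pair $\{f_{n-1},D\}$, and Corollary~\ref{Cor rad1} is what lets you split off the linear subspace $L=V(x_1,\ldots,x_{n-1})$ and kill it with $x_n^r$. The differences are in the packaging. First, you work with zero sets and conclude $\sqrt{\langle f_1,\ldots,f_{n-2},g_{n-1}\rangle}=\sqrt{I}$ from $V(J,g_{n-1})=V(I)$; this invokes the Nullstellensatz and your torus-orbit description of the components of $V(I)$, both of which presuppose $\Bbbk$ algebraically closed, whereas the paper's definition of set-theoretic complete intersection is the radical identity over an arbitrary field and its proof is purely ideal-theoretic. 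This is a fillable gap (extend scalars to $\overline{\Bbbk}$ and descend radical membership by faithful flatness), but you should say so, and in fact you can avoid it entirely: the proof of Theorem~\ref{th:gen-N} already establishes $(I:x_i^{\infty})=I$ for every $i$, so from $x_{n-1}^{P}g_{n-1}=f_{n-1}^{N}-j\in I$ you get the stronger and cleaner conclusion $g_{n-1}\in I$ without any geometry. Second, you never need to treat $D$ separately: the containment $V(J,g_{n-1})\subseteq V(I)$ handles it implicitly, whereas the paper exhibits the explicit relation $D^r-f^{\sum_{i=1}^{n-1}u_{in}}\in I\cap\langle x_1,\ldots,x_{n-1}\rangle$ and pushes it through Corollary~\ref{Cor rad1}; your shortcut is the geometric shadow of that computation and buys a slightly shorter verification at the cost of the field hypothesis just mentioned. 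With the remark about $(I:x_{n-1}^{\infty})=I$ inserted and the radical identity argued algebraically (or the base-change step made explicit), your proof is a faithful variant of the one in the paper.
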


\begin{proof}
Let $I$ be a critical ideal of Northcott type. By construction, there exists a matrix $\Phi$ as in \eqref{ecPhi} and $\mathbf{m} = (x_1^{u_{21}}, \ldots, x_{n-2}^{u_{n-1\, n-2}}, x_{n-1}^{u_{1\, n-1}}),$ with $u_{ij} > 0$ for all $i,j,$ such that $I$ is generated by $f_1, \ldots, f_{n-1}$ and $D = \det(\Phi)$, where $(f_1, \ldots, f_{n-1})^\top = \Phi\, \mathbf{m}^\top$. Moreover, by Theorem \ref{th:gen-N}, there exists $\mathcal{A} \subset \mathbb{Z} \oplus T$, where $T$ is a finite commutative group, such that $\mathbb N \mathcal A \cap (-\mathbb N \mathcal A) = \{0\}$ and $I = I_\mathcal{A}$.
In particular, $(I:x_i) = I$ for every $i$, as it is stated in the proof of Theorem \ref{th:gen-N}.

By Lemma \ref{Lemma conj}, there exists $f = x_n^r + h$, with $r \geq 1$ and $h \in \langle x_1, \ldots, x_{n-1} \rangle$, such that $f_{n-1} \in \sqrt{\langle f_1, \ldots, f_{n-2}, f \rangle}$. Moreover, $f \in (\langle f_1, \ldots f_{n-1} \rangle : x_{n-1}^\infty) \subset (I: x_{n-1}^\infty) = I$. Thus, $$\sqrt{\langle f_1, \ldots, f_{n-1} \rangle} \subseteq  \sqrt{\langle f_1, \ldots, f_{n-2}, f \rangle} \subseteq \sqrt{I}.$$ Now, $$D^r - f^{\sum_{i=1}^{n-1} u_{in}} \in I \cap \langle x_1, \ldots, x_{n-1} \rangle \subseteq \sqrt{I} \cap \langle x_1, \ldots, x_{n-1} \rangle.$$ Therefore, by Corollary \ref{Cor rad1}, $$D^r - f^{\sum_{i=1}^{n-1} u_{in}} \in \sqrt{\langle f_1, \ldots, f_{n-1} \rangle} \subseteq  \sqrt{\langle f_1, \ldots, f_{n-2}, f \rangle},$$ which implies $D \in  \sqrt{\langle f_1, \ldots, f_{n-2}, f \rangle}$ and we are done.
\end{proof}

We end this paper by showing that, alike the generalization of the family (N), the families (gN*) are as well set-theoretically complete intersections.

\begin{proposition}\label{stci_glu}
Let $G$ be a commutative group, $\mathcal A = \{\mathbf{a}_1, \ldots, \mathbf{a}_n\} \subset G $ and $\mathbf{a}_{n+1} \in G$. If $I_\mathcal{A}$ is a set-theoretic complete intersection and $S=\mathbb N\mathcal A+\mathbb N\mathbf a_{n+1}$ is a gluing of $\mathbb N \mathcal{A}$ and $\mathbb N \mathbf{a}_{n+1}$, then $I_S$ is a set-theoretic complete intersection.
\end{proposition}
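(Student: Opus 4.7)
The plan is to exhibit an explicit set of $\mathrm{ht}(I_S)$ polynomials whose radical agrees with $\sqrt{I_S}$, using the set-theoretic generators of $I_\mathcal{A}$ together with the single extra binomial produced by the gluing.

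First I would unpack what the gluing hypothesis tells us about generators. Because $S=\mathbb N\mathcal A+\mathbb N\mathbf a_{n+1}$ is a gluing of $\mathbb N\mathcal A$ and $\mathbb N\mathbf a_{n+1}$, and since $I_{\mathbb N\mathbf a_{n+1}}=\langle 0\rangle$, the definition of gluing provides a generating set of $I_S$ of the form $B_1\cup\{x_{n+1}^{c_{n+1}}-\mathbf x^{\mathbf w}\}$, where $B_1$ is a system of generators of $I_\mathcal A$ in $\Bbbk[x_1,\ldots,x_n]$ (viewed inside $\Bbbk[x_1,\ldots,x_{n+1}]$) and $\mathbf x^{\mathbf w}\in\Bbbk[x_1,\ldots,x_n]$. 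In particular, writing $\widetilde{I}_\mathcal A$ for the extension of $I_\mathcal A$ to $\Bbbk[x_1,\ldots,x_{n+1}]$, one has
\[
I_S \;=\; \widetilde{I}_\mathcal A + \langle x_{n+1}^{c_{n+1}}-\mathbf x^{\mathbf w}\rangle.
\]

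Next I would compare heights. The dimension of the zero set of a semigroup ideal equals the rank of the associated group, so $\mathrm{ht}(\widetilde{I}_\mathcal A)=(n+1)-(\mathrm{rank}(\mathbb Z\mathcal A)+1)=\mathrm{ht}(I_\mathcal A)$. On the other hand, by the remarks preceding the proposition the defining relation of a gluing forces $\mathbb Z\mathcal A\cap\mathbb Z\mathbf a_{n+1}=d\mathbb Z\neq 0$, hence $\mathrm{rank}(\mathbb ZS)=\mathrm{rank}(\mathbb Z\mathcal A)+\mathrm{rank}(\mathbb Z\mathbf a_{n+1})-1=\mathrm{rank}(\mathbb Z\mathcal A)$, and therefore
\[
\mathrm{ht}(I_S)\;=\;(n+1)-\mathrm{rank}(\mathbb ZS)\;=\;\mathrm{ht}(I_\mathcal A)+1.
\]

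Now the set-theoretic complete intersection hypothesis provides polynomials $g_1,\ldots,g_h\in\Bbbk[x_1,\ldots,x_n]$ with $h=\mathrm{ht}(I_\mathcal A)$ and $\sqrt{\langle g_1,\ldots,g_h\rangle}=\sqrt{I_\mathcal A}$. Define
\[
J\;:=\;\langle g_1,\ldots,g_h,\; x_{n+1}^{c_{n+1}}-\mathbf x^{\mathbf w}\rangle\;\subseteq\;\Bbbk[x_1,\ldots,x_{n+1}].
\]
Since radicals are preserved under polynomial ring extensions, $\sqrt{\langle g_1,\ldots,g_h\rangle}=\sqrt{\widetilde{I}_\mathcal A}$ in the bigger ring. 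Consequently $J\subseteq\sqrt{\widetilde{I}_\mathcal A}+\langle x_{n+1}^{c_{n+1}}-\mathbf x^{\mathbf w}\rangle\subseteq\sqrt{I_S}$, giving $\sqrt{J}\subseteq\sqrt{I_S}$. Conversely, every generator of $I_S$ lies in $\sqrt{J}$: the elements of $B_1$ are in $\widetilde{I}_\mathcal A\subseteq\sqrt{\widetilde{I}_\mathcal A}=\sqrt{\langle g_1,\ldots,g_h\rangle}\subseteq\sqrt{J}$, and $x_{n+1}^{c_{n+1}}-\mathbf x^{\mathbf w}\in J\subseteq\sqrt{J}$. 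Hence $I_S\subseteq\sqrt{J}$ and thus $\sqrt{I_S}=\sqrt{J}$.

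Since $J$ is generated by $h+1=\mathrm{ht}(I_S)$ polynomials, $I_S$ is a set-theoretic complete intersection. I expect no substantial obstacle in this argument; the only subtlety worth stressing carefully is the height computation via the rank identity $\mathrm{rank}(\mathbb ZS)=\mathrm{rank}(\mathbb Z\mathcal A)$, which is precisely the content of $\mathbb Z\mathcal A\cap\mathbb Z\mathbf a_{n+1}=d\mathbb Z$ recorded just before the proposition.
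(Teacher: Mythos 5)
Your proof is correct and follows essentially the same route as the paper: adjoin the gluing binomial $x_{n+1}^{c_{n+1}}-\mathbf x^{\mathbf w}$ to the $h$ set-theoretic generators of $I_\mathcal A$ and check that the radical of the resulting ideal is $\sqrt{I_S}$. The only difference is that you justify the height identity $\mathrm{ht}(I_S)=\mathrm{ht}(I_\mathcal A)+1$ via the rank computation $\mathrm{rank}(\mathbb ZS)=\mathrm{rank}(\mathbb Z\mathcal A)$, whereas the paper simply asserts it ``by construction''; this is a welcome addition, not a divergence.
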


\begin{proof}
Let $h = \mathrm{ht}(I_\mathcal{A})$. By hypothesis, there exist $f_1, \ldots, f_h$ such that $\sqrt{f_1, \ldots, f_h} = \sqrt{I_\mathcal{A}}$. By construction, $I_S = I_\mathcal{A} + \langle f \rangle$ and  $\mathrm{ht}(I_S) = h+1$. Putting this together, we obtain that \begin{align*}\sqrt{I_S} = \sqrt{I_\mathcal{A} + \langle f \rangle} = \sqrt{\sqrt{I_\mathcal{A}} + \sqrt{\langle f \rangle}} = \sqrt{\sqrt{\langle f_1, \ldots, f_h \rangle} + \sqrt{\langle f \rangle}} = \sqrt{\langle f_1, \ldots, f_h, f \rangle}\end{align*} and we are done.
\end{proof}

\begin{corollary}
Let $G$ be a commutative group, $\mathcal A = \{\mathbf{a}_1, \ldots, \mathbf{a}_n\} \subset G $ and $\mathbf{a}_{n+1} \in G$.
If $\mathbb N \mathcal{A}$ is a monoid of Northcott type and $S=\mathbb N\mathcal A+\mathbb N\mathbf a_{n+1}$ is a gluing of $\mathbb N \mathcal{A}$ and $\mathbb N \mathbf{a}_{n+1}$, then $I_S$ is a set-theoretic complete intersection.
\end{corollary}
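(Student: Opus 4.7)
The proof is a direct combination of the two results immediately preceding the corollary, so there is essentially no new work to do; the plan is just to assemble them correctly.

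First I would invoke Theorem \ref{Th:STCI}: since $\mathbb N\mathcal A$ is a monoid of Northcott type, by definition $I_\mathcal A$ is a critical ideal of Northcott type, and Theorem \ref{Th:STCI} tells us that every such ideal is a set-theoretic complete intersection. Thus the hypothesis ``$I_\mathcal A$ is a set-theoretic complete intersection'' required by Proposition \ref{stci_glu} is satisfied.

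Next, I would simply apply Proposition \ref{stci_glu} to the semigroup $S = \mathbb N\mathcal A + \mathbb N\mathbf a_{n+1}$, which by assumption is a gluing of $\mathbb N\mathcal A$ and $\mathbb N\mathbf a_{n+1}$. The conclusion is precisely that $I_S$ is a set-theoretic complete intersection, which is what we want.

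There is no real obstacle here: the corollary is phrased so that it sits exactly at the intersection of the two previously proved statements. The only thing to check is the logical chaining of hypotheses, i.e. that ``monoid of Northcott type'' is by definition the condition that makes $I_\mathcal A$ a critical ideal of Northcott type, so Theorem \ref{Th:STCI} applies without any extra verification.
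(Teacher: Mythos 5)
Your proof is correct and matches the paper's own argument, which likewise states that the corollary is a straightforward consequence of Theorem \ref{Th:STCI} and Proposition \ref{stci_glu}. You spell out the chaining of hypotheses a bit more explicitly than the paper does, but the approach is identical.
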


\begin{proof}
The proof is a straightforward consequence of Theorem \ref{Th:STCI} and Proposition \ref{stci_glu}.
\end{proof}


\end{document}